\newcommand{\vart}{\mathcal T}
\newcommand{\vars}{\mathcal S}
\newcommand{\sh}{\text{sh}}
\newcommand{\ve}{\mathbf{e}}
\newcommand{\ev}{\text{ev}}
\newcommand{\real}{\mathbb R}
\newcommand{\col}{{\text{col}}}
\newcommand{\row}{{\text{row}}}
\newcommand{\AND}{{\text{ AND }}}
\newcommand{\NOT}{{\text{ NOT }}}
\newtheorem{thm}{Theorem}
\newtheorem{prop}[thm]{Proposition}
\begin{document}
\author{Yuchen Pei}
\date{Mathematics Institute, \\University of Warwick, \\Coventry CV4 7AL, UK\\y.pei@warwick.ac.uk}
\title{A symmetry property for $q$-weighted Robinson-Schensted and other branching insertion algorithms}
\maketitle
\begin{abstract} 
In \cite{op13} a $q$-weighted version of the Robinson-Schensted algorithm was introduced. In this paper we show that this algorithm has a symmetry property analogous to the well known symmetry property of the normal Robinson-Schensted algorithm. The proof uses a generalisation of the growth diagram approach introduced by Fomin \cite{fomin79,fomin86,fomin94,fomin95}. This approach, which uses ``growth graphs'', can also be applied to a wider class of insertion algorithms which have a branching structure, including some of the other $q$-weighted versions of the Robinson-Schensted algorithm which have recently been introduced by Borodin-Petrov \cite{bp13}.
\end{abstract}
\section{Introduction}
In \cite{op13} a $q$-weighted version of the Robinson-Schensted algorithm was introduced.  
In this paper we show that this algorithm enjoys a symmetry property analoguous to the well-known symmetry property of the Robinson-Schensted algorithm.
The proof uses a generalisation of the growth diagram approach introduced by \cite{fomin79,fomin86,fomin94,fomin95}. 

The insertion algorithm we consider in this paper is based on column insertion, but the technique applies to any insertion algorithm belonging to a certain class of ``branching insertion algorithms'', as described in Section \ref{s:other} below.  
For example, \cite{bp13} have recently introduced a $q$-weighted version of the row insertion algorithm, which is defined similarly to the column insertion version of \cite{op13};  they also consider a wider family of such algorithms (and, more generally, dynamics on Gelfand-Tsetlin patterns), some of which fall into the framework considered in the present paper, and can 
similarly be shown to have the symmetry property.  
We discuss such extensions in Section \ref{s:other} below.

The Robinson-Schensted (RS) algorithm is a combinatorial algorithm which was introduced by Robinson \cite{robinson} and Schensted \cite{schensted}. 
It has wide applications in representation theory and probability theory, e.g. last passage percolation, totally asymmetric simple exclusion process / corner growth model, random matrix theory \cite{johansson}, queues in tandem \cite{noc03} and more. 

There are two versions of RS algorithms, the row insertion and column insertion version. In most of this paper we deal with column insertion and its $q$-version.
The RS algorithm transforms a word to a tableau pair of the same shape.
A word can be treated as a path, hence a random word corresponds to a random walk.
When taking such a random walk, the shape of the output tableaux is a Markov chain, whose transition kernel is related to the Schur symmetric functions \cite{noc1}.
A geometric generalisation transforms a Brownian motion with drift to a Markov process whose generator is related to $\mathsf{GL}(n,\mathbb R)$-Whittaker functions \cite{noc12}, which are eigenfunctions of the quantum Toda chain \cite{kostant77}.
The $q$-Whittaker functions on the one hand are a generalisation of the Schur symmetric functions and a specialisation of Macdonald $(q,t)$-symmetric functions when $t=0$ \cite{mac98}, and on the other hand are eigenfunctions of the $q$-deformed quantum Toda chain \cite{ruijsenaars90,etingof99}.
When $q\to0$ they become the Schur functions and when $q\to1$ with a proper scaling \cite{glo12} they converge to Whittaker functions.
In the spirit of this connection, a $q$-weighted Robinson-Schensted algorithm was formulated in \cite{op13}, which transforms the random walk to a Markov chain that is related to $q$-Whittaker functions.

As is expected the algorithm degenerates to the normal RS algorithm when $q\to0$. Part of the random $P$-tableau also has $q$-TASEP dynamics \cite{op13}, the latter introduced in \cite{sw98}, just as the same part of the random $P$-tableau of normal RS algorithm has TASEP dynamics \cite{noc03}.

Recently, a $q$-version of the RS row insertion algorithm was also introduced in \cite{bp13}. It denegerates to the normal RS algorithm with row insertion when $q\to0$.

In this paper we show that both algorithms enjoy a symmetry property analogous to the well-known symmetry property of the Robinson-Schensted algorithm.
Basically, the symmetry property for the Robinson-Schensted algorithms restricted to permutation inputs is the property that the output tableau pair is interchanged if the permutation is inversed.
Knuth \cite{knuth} generalised the normal row insertion algorithm to one which takes matrix input, which we refer to as Robinson-Schensted-Knuth (RSK) algorithm. For this algorithm the symmetry property is that the output tableau pair is interchanged if the matrix is transposed.
Note that the matrix becomes the permutation matrix when the RSK algorithm is restricted to permutation, hence the transposition of the matrix corresponds to inversion of the permutation.
Burge gives a similar generalisation of the column insertion algorithm \cite{burge74}, which we refer to as Burge's algorithm.

The symmetry property for the normal RS algorithm is normally discussed in the literature for the row insertion. However, in the permutation case the output tableau pair for row insertion is simply the transposition of the pair for column insertion. Therefore proofs of the symmetry property can be translated to column insertion instantly.
In the matrix input case, the Burge and RSK algorithms are also closely related so that proofs can be extended from one to the other naturally.

For a proof of the symmetry property one can see e.g. \cite{sagan,stanley,fulton}. There are a few different approaches to deal with it. 
One is the Viennot diagram \cite{viennot77} which provides a nice geometric construction of the RS algorithms. 
In the matrix input case there are two approaches which reduce to the Viennot diagram approach when restricted to permutations. One is the antichain or the inversion digraph construction of the RSK algorithm (which should extend naturally to the Burge algorithm) due to \cite{knuth}; the other (for both RSK and Burge) is Fomin's matrix-ball construction of \cite{fulton}. 

Another method is the growth diagram technique due to Fomin \cite{fomin79,fomin86,fomin94,fomin95}. It can be generalised to the RSK and Burge algorithms.
Greene's theorem \cite{greene74} gives a proof by showing the relation between the lengths of the longest subsequences of the input and the shape of the output. However, the growth diagram approach can be thought of as a fast construction to calculate these lengths.

Among all these techniques, the special structure of growth diagram can be extended to a class of algorithms what we will call branching algorithms, which include the $q$-weighted column and row insertion algorithms.
Therefore it is this approach which we use in this paper. For a simple description of the technique for normal row insertion see e.g. \cite{stanley}, whose column version will be shown in Section \ref{s:symnormal}.

The rest of the paper is organised in the following way.
In Section \ref{s:classical} we recall the insertion rule of a letter into a tableau for the normal Robinson-Schensted algorithm (with column insertion). We describe it in a way that suits the growth diagram. 
In Section \ref{s:symnormal} we describe the insertion rule for a word and state the symmetry property for the RS algorithm applied to permutations.
In Section \ref{s:qrs} and \ref{s:wordqrs} we describe the $q$-weighted insertion algorithm for letters and words in a way which is different, but equivalent to the definition given in \cite{op13}. 
In Section \ref{s:symq} we state and prove the symmetry property in the $q$-case.
Finally in Section \ref{s:other} we prove the symmetry property for the row insertion algorithm and more generally branching algorithms.

\textit{Acknowledgements}. The author would like to thank Neil O'Connell for guidance. Research of the author is supported by EPSRC grant number EP/H023364/1.
\section{Classical Robinson-Schensted algorithm}\label{s:classical}
A partition $\lambda=(\lambda_1,\lambda_2,\cdots,\lambda_k)\in W=\{(a_1,a_2,\dots)\in \bigcup_{k=1}^\infty \mathbb N_{\ge0}^k,a_1\ge a_2\ge \dots\}$ is a vector of weakly decreasing non-negative integer entries. Denote by $l(\lambda)$ the number of positive entries of $\lambda$ and $|\lambda|=\lambda_1+\cdots+\lambda_{l(\lambda)}$ the size of the partition. We say $\lambda$ is a partition of $n$ if $|\lambda|=n$, which we denote by $\lambda\vdash n$. A Young tableau $P$ with shape $\lambda$ is a left aligned array of $l(\lambda)$ rows of positive integers such that the entries are strictly increasing along each column and weakly increasing along each row, and such that the length of the $j$th row is $\lambda_j$. For example below is a tableau with shape $(4,3,2,2)$.
\begin{align}\label{tab:ex1}
\begin{array}{cccc}
  1&1&3&4\\3&5&8&\\6&7&&\\8&8&&
\end{array}
\end{align}
We denote by $\sh P$ the shape of tableau $P$, and $\vart_\ell$ the set of all tableaux with entries no greater than $\ell$. Denote $[n]=\{1,2,\cdots,n\}$ for a positive integer $n$. A standard tableau $Q$ is a tableau with distinct entries from $[|\sh Q|]$. For example below is a standard tableau with shape $(4,3,2,2)$.
\begin{align*}
  \begin{array}{cccc}
    1&3&4&7\\2&5&8&\\6&10&&\\9&11&&
  \end{array}
\end{align*}
We denote by $\vars_n$ the set of standard tableaux with shape of size $n$. For example the above tableau is an element of $\vars_{11}$.

For a tableau $P$, we denote by $P^k$ its subtableau containing all entries no greater than $k$ and call it the $k$th subtableau of $P$. For example the $6$th subtableau of the tableau shown in \eqref{tab:ex1} is
\begin{align*}
  \begin{array}{cccc}
    1&1&3&4\\3&5&&\\6&&&
  \end{array}
\end{align*}
A tableau $P$ can be identified by the shape of its subtableaux, which we usually denote by $\lambda^k=\sh P^k$. Also let $\lambda^0=\emptyset$ to be the empty partition.
Evidently a tableau has only finitely many different $\lambda^i$'s and there exists an $\ell$ such that $\lambda^i=\sh P$ for $i\ge\ell$. We call $\lambda^i$ the $i$th shape of $P$.
We can identify $P$ with these shapes and write $P=\lambda^0\prec\lambda^1\prec\lambda^2\prec\dots\prec\lambda^\ell$, where ``$\prec$'' is an interlacing relation: for $a=(a_1,a_2,\dots)$ and $b=(b_1,b_2,\dots)$, $a\prec b$ means $b_1\ge a_1\ge b_2 \ge a_2 \ge\dots$.
For example the tableau $P$ in \eqref{tab:ex1} is identified as
\begin{align*}
  P=\emptyset\prec2\prec2\prec31\prec41\prec42\prec421\prec422\prec4322.
\end{align*}

The basic operation of the Robinson-Schensted algorithm is to insert a letter $k\in\mathbb N_+:=\mathbb N\backslash\{0\}$ into a tableau $P=\emptyset\prec\lambda^1\prec\lambda^2\prec\dots\dots\prec\lambda^\ell$ and produce a new tableau $\tilde P=\emptyset\prec\tilde\lambda^1\prec\tilde\lambda^2\prec\dots\prec\tilde\lambda^\ell$.
To do this we first find the lowest row in $\lambda^k$ such that appending a box at the end of that row would preserve the interlacement between the $k-1$th shape and the $k$th shape, and append the box to that row.
Suppose the row has index $j_k$, then we find the lowest row in $\lambda^{k+1}$ that is no lower than row $j_k$ such that appending a box at the end of that row would preserve the interlacement between the $k$th shape and the $k+1$th shape, and append the box to that row, and so on and so forth. 
More precisely, define
\begin{align*}
  j_{k-1}=k;\quad
  j_{i}=\max(\{j\le j_{i-1}:\lambda^{i-1}_{j-1}>\lambda^i_j\}\cup\{1\}),\qquad i\ge k.
\end{align*}
Then the new tableau $\tilde P$ is defined by
\begin{align*}
  \tilde\lambda^i=
  \begin{cases}
    \lambda^i,&\text{if }i<k;\\
    \lambda^i+\ve_{j_i},&\text{otherwise.}
  \end{cases}
\end{align*}
where $\ve_{j}$ is the $j$th standard basis of $\real^{\mathbb N_+}$.

For example, if we insert a 6 into the tableau shown in \eqref{tab:ex1}, the insertion process is shown as follows:
\begin{align*}
  \begin{array}{cccc}
    \hat5&\hat5&\hat5&\hat5\\\hat5&\hat5&&\\\hat6&\textcolor{red}{\hat6}&&
  \end{array}
  \to
  \begin{array}{cccc}
    \hat6&\hat6&\hat6&\hat6\\\hat6&\hat6&\textcolor{red}{\hat7}&\\\hat6&\hat7&&
  \end{array}
  \to
  \begin{array}{cccc}
    \hat7&\hat7&\hat7&\hat7\\\hat7&\hat7&\hat8&\textcolor{red}{\hat8}\\\hat7&\hat7&&\\\hat8&\hat8&&
  \end{array}
\end{align*}
where each $\hat i$ denotes a box in $\lambda^i$, and each \textcolor{red}{red} entry denotes an appended box. The resultant tableau is thus
\begin{align*}
  \tilde P=\emptyset\prec2\prec2\prec31\prec41\prec42\prec422\prec432\prec4422=
  \begin{array}{cccc}
    1&1&3&4\\3&5&7&8\\6&6&&\\8&8&&
  \end{array}
\end{align*}

One can visualise the insertion process by building the shapes up vertically.
\begin{center}
  \begin{tikzpicture}
    \tikzstyle{every node}=[fill=white]
    \node (00) at (0,0) {$\emptyset$};
    \node (10) at (3,0) {$\emptyset$};
    \foreach \i in {1,...,3}
    {
    \node (0\i) at (0,\i) {$\lambda^{\i}$};
    \node (1\i) at (3,\i) {$\tilde\lambda^{\i}$};
    }
    \node (04) at (0,4) {$\lambda^{\ell}$};
    \node (14) at (3,4) {$\tilde\lambda^{\ell}$};
    \node at (1.5,3.5) {$\vdots$};
    \foreach \i in {0,...,4}
    \draw[->] (0\i) -- (1\i);
    \foreach \i [count=\j] in {0,...,2}
    {
    \draw[->] (0\i) -- (0\j);
    \draw[->] (1\i) -- (1\j);
    }
    \draw[dotted,->] (03) -- (04);
    \draw[dotted,->] (13) -- (14);
    \draw[->] (0,4.5) --  node[above]{Inserting $k$} (3,4.5);
    \draw[->] (-.5,0) -- node[left]{Tableau $P$} (-.5,4);
    \draw[->] (3.5,0) -- node[right]{Tableau $\tilde P$} (3.5,4);

  \end{tikzpicture}
\end{center}
As we can see, this forms a one-column lattice diagram such that for each $i\le k$, the vertices labeled with $\lambda^{i-1},\lambda^{i},\tilde\lambda^{i-1}$ and $\tilde\lambda^{i}$ surround a box, which we call the $i$th box. 
We can put an $X$ into the $k$th box to indicate that the number inserted into $P$ is $k$. 
The corresponding diagram of the previous example where we insert a 6 into tableau \eqref{tab:ex1} is:
\begin{center}
  \begin{tikzpicture}
    \tikzstyle{every node}=[fill=white]
    \node (00) at (0,0) {$\emptyset$};
    \node (01) at (0,1) {$2$};
    \node (02) at (0,2) {$2$};
    \node (03) at (0,3) {$31$};
    \node (04) at (0,4) {$41$};
    \node (05) at (0,5) {$42$};
    \node (06) at (0,6) {$421$};
    \node (07) at (0,7) {$422$};
    \node (08) at (0,8) {$4322$};

    \node (10) at (1.1,0) {$\emptyset$};
    \node (11) at (1.1,1) {$2$};
    \node (12) at (1.1,2) {$2$};
    \node (13) at (1.1,3) {$31$};
    \node (14) at (1.1,4) {$41$};
    \node (15) at (1.1,5) {$42$};
    \node (16) at (1.1,6) {$422$};
    \node (17) at (1.1,7) {$432$};
    \node (18) at (1.1,8) {$4422$};
    
    \node at (.5,5.55) {$X$};

    \foreach \i in {0,...,8}
    \draw[->] (0\i) -- (1\i);
    \foreach \i [count=\j] in {0,...,7}
    {
    \draw[->] (0\i) -- (0\j);
    \draw[->] (1\i) -- (1\j);
    }

  \end{tikzpicture}
\end{center}
\section{Symmetry property for the Robinson-Schensted algorithm}\label{s:symnormal}
For a word $w=w_1w_2\dots w_n\in[l]^n$, the Robinson-Schensted algorithm starts with the empty tableau $P(0)=\emptyset$.
Then $w_1$ is inserted into $P(0)$ to obtain $P(1)$, then $w_2$ into $P(1)$ to obtain $P(2)$ and so on.
The recording tableau $Q$ is a standard tableau defined by:
\begin{align*}
 Q=\sh P(0)\prec\sh P(1)\prec\dots\prec\sh P(n).
\end{align*}

For example the following table shows the the process of inserting the word $w=31342$:
\begin{center}
  \begin{tabular}{cccccc}
    $i$&1&2&3&4&5\\\hline
    $P(i)$&
    $\begin{array}{c}
      3
    \end{array}$&
    $\begin{array}{cc}
      1&3
    \end{array}$&
    $\begin{array}{cc}
      1&3\\3&
    \end{array}$&
    $\begin{array}{cc}
      1&3\\3&\\4&
    \end{array}$&
    $\begin{array}{ccc}
      1&3&3\\2&&\\4&&
    \end{array}$\\\hline
    $Q^i$&
    $\begin{array}{c}
      3
    \end{array}$&
    $\begin{array}{cc}
      1&2
    \end{array}$&
    $\begin{array}{cc}
      1&2\\3&
    \end{array}$&
    $\begin{array}{cc}
      1&2\\3&\\4&
    \end{array}$&
    $\begin{array}{ccc}
      1&2&5\\3&&\\4&&
    \end{array}$\\
  \end{tabular}.
\end{center}
The corresponding pair of tableaux, which we denote as $(P(w),Q(w))$ are:
\begin{align*}
  (P(w),Q(w))=\left(
    \begin{array}{ccc}
      1&3&3\\2&&\\4&&
    \end{array},
    \begin{array}{ccc}
      1&2&5\\3&&\\4&&
    \end{array}
    \right)
\end{align*}

If we denote $(\lambda^k(i))_{1\le k\le\ell}$ as the shape of the subtableaux for $P(i)$, then since $P(i)$ is obtained from $P(i-1)$ by inserting $w_i$, we can construct a $\{0,1,\dots,n\}\times\{0,1,\dots,\ell\}\subset\mathbb N^2$ lattice growth diagram by concatenating the one-column lattice diagrams defined in the previous section. This is illustrated in the following picture.
\begin{center}
  \begin{tikzpicture}
    \def \s{1.8}
    \tikzstyle{every node}=[fill=white,font=\footnotesize]
    \foreach \i in {0,...,4}
    {
    \node (0\i) at (0,\i*\s) {$\emptyset$};
    \node (\i0) at (\i*\s,0) {$\emptyset$};
    }
    \foreach \i in {1,...,2}
    \foreach \j in {1,...,2}
      \node (\i\j) at (\i*\s,\j*\s) {$\lambda^{\j}(\i)$};

    \foreach \i in {1,...,2}
    {
    \node (\i3) at (\i*\s,3*\s) {$\lambda^{\ell-1}(\i)$};
    \node (\i4) at (\i*\s,4*\s) {$\lambda^{\ell}(\i)$};
    \node (3\i) at (3*\s,\i*\s) {$\lambda^{\i}(n-1)$};
    \node (4\i) at (4*\s,\i*\s) {$\lambda^{\i}(n)$};
    }

    \node (33) at (3*\s,3*\s) {$\lambda^{\ell-1}(n-1)$};
    \node (34) at (3*\s,4*\s) {$\lambda^{\ell}(n-1)$};
    \node (43) at (4*\s,3*\s) {$\lambda^{\ell-1}(n)$};
    \node (44) at (4*\s,4*\s) {$\lambda^{\ell}(n)$};

    \tikzstyle{every node}=[fill=white,font=\huge]
    \node at (\s,2.5*\s) {$\vdots$};
    \node at (3.5*\s,2.5*\s) {$\vdots$};
    \tikzstyle{every node}=[fill=white,font=\small]
    \node at (2.5*\s,\s) {$\ldots$};
    \node at (2.5*\s,3.5*\s) {$\ldots$};
    \foreach \i [count=\ii] in {0,...,1}
    \foreach \j in {0,...,2}
    {
      \draw[->] (\i\j) -- (\ii\j);
      \draw[->] (3\j) -- (4\j);
      \draw[->] (\i3) -- (\ii3);
      \draw[->] (\i4) -- (\ii4);
      \draw[->] (\j\i) -- (\j\ii);
      \draw[->] (\j3) -- (\j4);
      \draw[->] (3\i) -- (3\ii);
      \draw[->] (4\i) -- (4\ii);
    }

    \draw[->] (33) -- (34);
    \draw[->] (33) -- (43);
    \draw[->] (43) -- (44);
    \draw[->] (34) -- (44);
  \end{tikzpicture}
\end{center}
As such, the tableau pair obtained are for $P$ the shapes on the vertices of the rightmost vertical and for $Q$ the shapes on the vertices on the top horizontal line: 
\begin{align*}
P=\lambda^1(n)\nearrow\lambda^2(n)\nearrow\dots\nearrow\lambda^\ell(n);\\
Q=\lambda^\ell(1)\nearrow\lambda^\ell(2)\nearrow\dots\nearrow\lambda^\ell(n).
\end{align*}

For example, if we take $\ell=4$ for word $w=31342$, the growth diagram is as follows:
\begin{center}
  \begin{tikzpicture}
    \draw (0,0) grid (5,4);
    \node at (.5,2.5) {$X$};
    \node at (1.5,.5) {$X$};
    \node at (2.5,2.5) {$X$};
    \node at (3.5,3.5) {$X$};
    \node at (4.5,1.5) {$X$};

    \tikzstyle{every node}=[fill=white]
    \foreach \i in {0,...,5}
    \node (\i0) at (\i,0) {$\emptyset$};
    \foreach \i in {0,...,4}
    \node (0\i) at (0,\i) {$\emptyset$};

    \node at (1,1) {$\emptyset$};
    \node at (1,2) {$\emptyset$};
    \node at (1,3) {$1$};
    \node at (1,4) {$1$};
    \node at (2,1) {$1$};
    \node at (2,2) {$1$};
    \node at (2,3) {$2$};
    \node at (2,4) {$2$};
    \node at (3,1) {$1$};
    \node at (3,2) {$1$};
    \node at (3,3) {$21$};
    \node at (3,4) {$21$};
    \node at (4,1) {$1$};
    \node at (4,2) {$1$};
    \node at (4,3) {$21$};
    \node at (4,4) {$211$};
    \node at (5,1) {$1$};
    \node at (5,2) {$11$};
    \node at (5,3) {$31$};
    \node at (5,4) {$311$};
  \end{tikzpicture}
\end{center}

The Robinson-Schensted algorithm was initially defined to take a permutation as an input and output a pair of \textit{standard} tableaux with the same shape.
In this case we take the word identified by $\sigma(1)\sigma(2)\sigma(3)\dots\sigma(n)$ as the input, which we also denote by $\sigma$. 
A classical result of the algorithm is the symmetry property:
\begin{thm}[See e.g. \cite{sagan,stanley,fulton}]\label{thm:1}For any permutation $\sigma$,
  \begin{align*}
    (P(\sigma^{-1}),Q(\sigma^{-1}))=(Q(\sigma),P(\sigma)).
  \end{align*}
\end{thm}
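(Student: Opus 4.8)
The plan is to encode the construction for a permutation $\sigma$ as a two-dimensional growth diagram and to exploit the symmetry of the underlying \emph{local} growth rule under reflection in the main diagonal. Since the letters of a permutation in $S_n$ lie in $[n]$, I take $\ell=n$, so the diagram is the square $\{0,\dots,n\}^2$. I place the shape $\lambda^y(x)$ at the vertex $(x,y)$, fill the left edge ($x=0$) and the bottom edge ($y=0$) with $\emptyset$, and put a single $X$ in the box whose north-east corner is $(x,y)$ precisely when $\sigma(x)=y$; because $\sigma$ is a permutation, each row and each column contains exactly one $X$. The first task is to recast the one-column insertion rule of Section \ref{s:classical} as a genuinely \emph{local} forward rule: the shape $\kappa$ at the north-east corner of a box is a function $\kappa=f(\rho,\lambda,\nu,\epsilon)$ of the south-west corner $\rho$, the south-east corner $\lambda$, the north-west corner $\nu$, and the indicator $\epsilon\in\{0,1\}$ of an $X$ in the box. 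Granting this, filling the diagram from the two empty boundary edges by repeatedly applying $f$ reproduces exactly the shapes $\lambda^y(x)$ of the algorithm, so that $P(\sigma)$ is read along the right edge $x=n$ and $Q(\sigma)$ along the top edge $y=n$.

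The heart of the argument is that $f$ is symmetric in its two ``side'' arguments, i.e.
\begin{align*}
f(\rho,\lambda,\nu,\epsilon)=f(\rho,\nu,\lambda,\epsilon).
\end{align*}
I would prove this by a short case analysis, using that each of $\lambda,\nu$ is either $\rho$ or $\rho$ with one box added. Either (i) $\lambda=\nu=\rho$, in which case $\kappa=\rho$ when $\epsilon=0$ and $\kappa=\rho+\ve_1$ when $\epsilon=1$; or (ii) $\lambda\ne\nu$, in which case $\kappa=\lambda\cup\nu$ is their union; or (iii) $\lambda=\nu\ne\rho$, in which case $\kappa$ is obtained from $\rho$ by the carry-over box dictated by the insertion recursion. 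In every case the prescription for $\kappa$ depends only on the unordered pair $\{\lambda,\nu\}$ together with $\rho$ and $\epsilon$, never on which neighbour is horizontal and which is vertical; this is the displayed identity. One must also confirm that $\epsilon=0$ is forced in cases (ii) and (iii), so that the $X$-data is consistent under the reflection.

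With the local symmetry in hand the theorem follows formally. Reflecting the whole diagram in the diagonal $x=y$ interchanges each south-east corner with the corresponding north-west corner while fixing the south-west and north-east corners, so by the identity above the array of shapes is invariant under this reflection \emph{provided} the input data are. The boundary data are invariant because both boundary edges carry $\emptyset$, and the $X$-data are invariant because an $X$ at $(x,y)$, meaning $\sigma(x)=y$, is sent to an $X$ at $(y,x)$, which is exactly the condition $\sigma^{-1}(y)=x$ defining the diagram for $\sigma^{-1}$. Hence the reflected diagram for $\sigma$ is literally the diagram for $\sigma^{-1}$. Under the reflection the right edge of the $\sigma$-diagram becomes the top edge of the $\sigma^{-1}$-diagram and vice versa, which reads off as $P(\sigma)=Q(\sigma^{-1})$ and $Q(\sigma)=P(\sigma^{-1})$, that is $(P(\sigma^{-1}),Q(\sigma^{-1}))=(Q(\sigma),P(\sigma))$.

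I expect the main obstacle to be the first step together with the case analysis: showing that the sequential, column-by-column insertion of Section \ref{s:classical} genuinely localises to a single box-rule $f$, and then verifying the symmetry of $f$ directly from the recursion defining the indices $j_i$. That recursion couples successive rows within a column, so some care is needed to argue that, once the three known corners of a box are fixed, the value at the fourth corner no longer depends on the global order in which the boxes are processed. It is exactly this decoupling that legitimises the local — and hence diagonally symmetric — description on which the whole proof rests.
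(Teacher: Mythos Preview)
Your approach is exactly the paper's: recast column insertion as a local growth rule on the $n\times n$ grid, verify that the rule is symmetric under swapping its two side inputs, and conclude by reflecting the diagram across the diagonal. Your three cases are a repackaging of the paper's five (your (ii) absorbs its Cases 2, 3, 4 via the union formula; your (iii) is its Case 5; your (i) covers its Case 1 together with the trivial overlap of 2 and 3).

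There is, however, one concrete slip. In case (i) with $\epsilon=1$ you write $\kappa=\rho+\ve_1$; that is the \emph{row} insertion rule. For the column insertion of this paper the correct output is $\kappa=\rho+\ve_{l(\rho)+1}$: when the $X$ sits in box $(m,k)$ the letter $k$ is new (no earlier $\sigma_i$ equals $k$, since $\sigma$ is a permutation), so the insertion recursion places it at the bottom of the first column, creating a new row. This matters not for the symmetry of $f$ --- case (i) is trivially symmetric since both side inputs equal $\rho$ --- but for the claim that $f$ actually reproduces the algorithm. With the wrong formula the right and top edges of the filled diagram would no longer read off $P(\sigma)$ and $Q(\sigma)$, and the argument would prove nothing about the map in question. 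Once you replace $\ve_1$ by $\ve_{l(\rho)+1}$, the rest of your outline goes through unchanged and matches the paper's proof.
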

\begin{proof}[Sketch proof]
We present a growth diagram proof whose row insertion counterpart can be found in \cite{stanley}. Basically the algorithm is reformulated in a way that is symmetric on the $n$ by $n$ growth diagram. We index a box by $(m,k)$ if its four vertices are $(m-1,k-1)$, $(m-1,k)$, $(m,k-1)$ and $(m,k)$.
For any box $(m,k)$ denote by $\lambda,\mu^1,\mu^2,\nu$ the partitions on $(m-1,k-1)$, $(m-1,k)$, $(m,k-1)$ and $(m,k)$ respectively (see the diagram below).
\begin{center}
  \begin{tikzpicture}[scale=1.2]
    \node (100) at (0,0) {$\lambda$};
    \node (110) at (1,0) {$\mu^2$};
    \node (101) at (0,1) {$\mu^1$};
    \node (111) at (1,1) {$\nu$};
    \draw (100) -- (101) -- (111) -- (110) -- (100);
  \end{tikzpicture}
\end{center}
The algorithm goes with the following rule:
\begin{enumerate}
  \item If there's an $X$ in the box and $\lambda=\mu^1=\mu^2$, then $\nu=\lambda+\ve_{l(\lambda)+1}$, that is $\nu$ is obtained by adding a box that forms a new row itself at the bottom of $\lambda$.
  \item If there's no $X$ in the box and $\lambda=\mu^1$, then $\nu=\mu^2$.
  \item If there's no $X$ in the box and $\lambda=\mu^2$, then $\nu=\mu^1$.
  \item If there's no $X$ in the box and $\mu^1=\lambda+\ve_i$, $\mu^2=\lambda+\ve_j$ with $i\neq j$, then $\nu=\lambda+\ve_i+\ve_j=\mu^1\cup\mu^2$.
  \item If there's no $X$ in the box and $\mu^1=\mu^2=\lambda+\ve_i$, then $\nu=\lambda+\ve_i+\ve_{i'}$, where $i'=\max(\{j\le i:\mu^1_{j-1}>\mu^1_j\}\cup\{1\}).$
\end{enumerate}
Note that these rules do not apply to the words case, which is why permutation is special. 
The 5 cases together with a trivial case that belongs to both case 2 and case 3 are illustrated as follows.
\begin{center}
  \begin{tikzpicture}[scale=2.2]
    \node (100) at (0,0) {
    \begin{tikzpicture}[scale=.3]
      \draw (0,0) -- (5,0) -- (5,-1) -- (3,-1) -- (3,-3) -- (2,-3) -- (2,-4) -- (0,-4) -- (0,0);
    \end{tikzpicture}};
    \node (110) at (1,0) {
    \begin{tikzpicture}[scale=.3]
      \draw (0,0) -- (5,0) -- (5,-1) -- (3,-1) -- (3,-3) -- (2,-3) -- (2,-4) -- (0,-4) -- (0,0);
    \end{tikzpicture}};
    \node (101) at (0,1) {
    \begin{tikzpicture}[scale=.3]
      \draw (0,0) -- (5,0) -- (5,-1) -- (3,-1) -- (3,-3) -- (2,-3) -- (2,-4) -- (0,-4) -- (0,0);
    \end{tikzpicture}};
    \node (111) at (1,1) {
    \begin{tikzpicture}[scale=.3]
      \draw (0,0) -- (5,0) -- (5,-1) -- (3,-1) -- (3,-3) -- (2,-3) -- (2,-4) -- (0,-4) -- (0,0);
      \draw (1,-4) -- (1,-5) -- (0,-5) -- (0,-4);
    \end{tikzpicture}};
    \node at (.5,.5) {$X$};
    \node at (.5,-.4) {Case 1};
    \draw (100) -- (101) -- (111) -- (110) -- (100);
    \begin{scope}[shift={(2.2,0)}]
      \node (200) at (0,0) {
      \begin{tikzpicture}[scale=.3]
	\draw (0,0) -- (5,0) -- (5,-1) -- (3,-1) -- (3,-3) -- (2,-3) -- (2,-4) -- (0,-4) -- (0,0);
      \end{tikzpicture}};
      \node (210) at (1,0) {
      \begin{tikzpicture}[scale=.3]
	\draw (0,0) -- (5,0) -- (5,-1) -- (3,-1) -- (3,-3) -- (2,-3) -- (2,-4) -- (0,-4) -- (0,0);
	\draw (3,-3) -- (3,-4) -- (2,-4);
      \end{tikzpicture}};
      \node (201) at (0,1) {
      \begin{tikzpicture}[scale=.3]
	\draw (0,0) -- (5,0) -- (5,-1) -- (3,-1) -- (3,-3) -- (2,-3) -- (2,-4) -- (0,-4) -- (0,0);
      \end{tikzpicture}};
      \node (211) at (1,1) {
      \begin{tikzpicture}[scale=.3]
	\draw (0,0) -- (5,0) -- (5,-1) -- (3,-1) -- (3,-3) -- (2,-3) -- (2,-4) -- (0,-4) -- (0,0);
	\draw (3,-3) -- (3,-4) -- (2,-4);
      \end{tikzpicture}};
      \draw (200) -- (201) -- (211) -- (210) -- (200);
      \node at (.5,-.4) {Case 2};
    \end{scope}
  \end{tikzpicture}
\end{center}
\begin{center}
  \begin{tikzpicture}[scale=2.2]
    \node (100) at (0,0) {
    \begin{tikzpicture}[scale=.3]
      \draw (0,0) -- (5,0) -- (5,-1) -- (3,-1) -- (3,-3) -- (2,-3) -- (2,-4) -- (0,-4) -- (0,0);
    \end{tikzpicture}};
    \node (110) at (1,0) {
    \begin{tikzpicture}[scale=.3]
      \draw (0,0) -- (5,0) -- (5,-1) -- (3,-1) -- (3,-3) -- (2,-3) -- (2,-4) -- (0,-4) -- (0,0);
    \end{tikzpicture}};
    \node (101) at (0,1) {
    \begin{tikzpicture}[scale=.3]
      \draw (0,0) -- (5,0) -- (5,-1) -- (3,-1) -- (3,-3) -- (2,-3) -- (2,-4) -- (0,-4) -- (0,0);
      \draw (5,0) -- (6,0) -- (6,-1) -- (5,-1);
    \end{tikzpicture}};
    \node (111) at (1,1) {
    \begin{tikzpicture}[scale=.3]
      \draw (0,0) -- (5,0) -- (5,-1) -- (3,-1) -- (3,-3) -- (2,-3) -- (2,-4) -- (0,-4) -- (0,0);
      \draw (5,0) -- (6,0) -- (6,-1) -- (5,-1);
    \end{tikzpicture}};
    \draw (100) -- (101) -- (111) -- (110) -- (100);
    \node at (.5,-.5) {Case 3};
    \begin{scope}[shift={(2.2,0)}]
      \node (200) at (0,0) {
      \begin{tikzpicture}[scale=.3]
	\draw (0,0) -- (5,0) -- (5,-1) -- (3,-1) -- (3,-3) -- (2,-3) -- (2,-4) -- (0,-4) -- (0,0);
      \end{tikzpicture}};
      \node (210) at (1,0) {
      \begin{tikzpicture}[scale=.3]
	\draw (0,0) -- (5,0) -- (5,-1) -- (3,-1) -- (3,-3) -- (2,-3) -- (2,-4) -- (0,-4) -- (0,0);
	\draw (1,-4) -- (1,-5) -- (0,-5) -- (0,-4);
      \end{tikzpicture}};
      \node (201) at (0,1) {
      \begin{tikzpicture}[scale=.3]
	\draw (0,0) -- (5,0) -- (5,-1) -- (3,-1) -- (3,-3) -- (2,-3) -- (2,-4) -- (0,-4) -- (0,0);
	\draw (4,-1) -- (4,-2) -- (3,-2);
      \end{tikzpicture}};
      \node (211) at (1,1) {
      \begin{tikzpicture}[scale=.3]
	\draw (0,0) -- (5,0) -- (5,-1) -- (3,-1) -- (3,-3) -- (2,-3) -- (2,-4) -- (0,-4) -- (0,0);
	\draw (1,-4) -- (1,-5) -- (0,-5) -- (0,-4);
	\draw (4,-1) -- (4,-2) -- (3,-2);
      \end{tikzpicture}};
      \draw (200) -- (201) -- (211) -- (210) -- (200);
      \node at (.5,-.5) {Case 4};
    \end{scope}
    \begin{scope}[shift={(0,-2.2)}]
    \node (100) at (0,0) {
    \begin{tikzpicture}[scale=.3]
      \draw (0,0) -- (5,0) -- (5,-1) -- (3,-1) -- (3,-3) -- (2,-3) -- (2,-4) -- (0,-4) -- (0,0);
    \end{tikzpicture}};
    \node (110) at (1,0) {
    \begin{tikzpicture}[scale=.3]
      \draw (0,0) -- (5,0) -- (5,-1) -- (3,-1) -- (3,-3) -- (2,-3) -- (2,-4) -- (0,-4) -- (0,0);
      \draw (3,-3) -- (3,-4) -- (1,-4);
    \end{tikzpicture}};
    \node (101) at (0,1) {
    \begin{tikzpicture}[scale=.3]
      \draw (0,0) -- (5,0) -- (5,-1) -- (3,-1) -- (3,-3) -- (2,-3) -- (2,-4) -- (0,-4) -- (0,0);
      \draw (3,-3) -- (3,-4) -- (1,-4);
    \end{tikzpicture}};
    \node (111) at (1,1) {
    \begin{tikzpicture}[scale=.3]
      \draw (0,0) -- (5,0) -- (5,-1) -- (3,-1) -- (3,-3) -- (2,-3) -- (2,-4) -- (0,-4) -- (0,0);
      \draw (3,-3) -- (3,-4) -- (1,-4);
      \draw (4,-1) -- (4,-2) -- (3,-2);
    \end{tikzpicture}};
    \draw (100) -- (101) -- (111) -- (110) -- (100);
    \node at (.5,-.5) {Case 5};
    \end{scope}[shift={(0,-2.2)}]
    \begin{scope}[shift={(2.2,-2.2)}]
      \node (200) at (0,0) {
      \begin{tikzpicture}[scale=.3]
	\draw (0,0) -- (5,0) -- (5,-1) -- (3,-1) -- (3,-3) -- (2,-3) -- (2,-4) -- (0,-4) -- (0,0);
      \end{tikzpicture}};
      \node (210) at (1,0) {
      \begin{tikzpicture}[scale=.3]
	\draw (0,0) -- (5,0) -- (5,-1) -- (3,-1) -- (3,-3) -- (2,-3) -- (2,-4) -- (0,-4) -- (0,0);
      \end{tikzpicture}};
      \node (201) at (0,1) {
      \begin{tikzpicture}[scale=.3]
	\draw (0,0) -- (5,0) -- (5,-1) -- (3,-1) -- (3,-3) -- (2,-3) -- (2,-4) -- (0,-4) -- (0,0);
      \end{tikzpicture}};
      \node (211) at (1,1) {
      \begin{tikzpicture}[scale=.3]
	\draw (0,0) -- (5,0) -- (5,-1) -- (3,-1) -- (3,-3) -- (2,-3) -- (2,-4) -- (0,-4) -- (0,0);
      \end{tikzpicture}};
      \draw (200) -- (201) -- (211) -- (210) -- (200);
      \node at (.5,-.55) {\begin{tabular}{c}``Boring'' intersection \\ of Case 2 and Case 3\end{tabular}};
    \end{scope}
  \end{tikzpicture}
\end{center}
This way, all the vertices of the diagram can be labelled recursively given $\emptyset$ as the boundary condition on the leftmost and bottom vertices. One could check that this is indeed equivalent to the definition in the previous section.
Moreover, the transposition of the lattice diagram preserves the algorithm. 
That is: if we put $X$'s in boxes $(\sigma(i),i)_{i\le n}$ rather than $(i,\sigma(i))_{i\le n}$, and label each vertex $(i,j)$ with what was labelled on $(j,i)$, we end up with the configuration that is the same as if we apply the rules 1 through 5.
This immediately finishes the proof.
\end{proof}

\section{A $q$-weighted Robinson-Schensted algorithm}\label{s:qrs}
In a recent paper \cite{op13} a $q$-weighted Robinson-Schensted algorithm was introduced. In this and the next section we describe the algorithm in a different way from the definition in \cite{op13}. At the end of next section it is obvious to see that:
\begin{prop}
  The algorithm described in this section for inserting a letter to a tableau and in the next section for inserting a word to an empty tableau is an equivalent reformulation of the $q$-weighted Robinson-Schensted algorithm defined in \cite{op13}.
\end{prop}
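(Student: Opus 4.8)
The plan is to establish the equivalence by a direct comparison of the two descriptions, using the fact that both act on the same combinatorial object. Recall that a tableau is identified with its interlacing sequence of shapes $\emptyset\prec\lambda^1\prec\dots\prec\lambda^\ell$, and that in both formulations the insertion of a letter $k$ produces a random sequence $\emptyset\prec\tilde\lambda^1\prec\dots\prec\tilde\lambda^\ell$ to which a $q$-weight is attached. First I would recall the single-letter insertion of \cite{op13} precisely and rewrite it as a sequence of local random moves indexed by the level $i$, recording the $q$-weight attached to each admissible choice. This casts the original algorithm into the same shape-by-shape form as the reformulation of Section \ref{s:qrs}, where $\tilde\lambda^i$ is obtained from $\lambda^{i-1},\lambda^i$ and $\tilde\lambda^{i-1}$ by a $q$-weighted refinement of the deterministic rule of Section \ref{s:classical}.

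The key step is to compare the two procedures one level at a time. Both have a sequential structure, so that the conditional law of $\tilde\lambda^i$ depends only on $\lambda^{i-1},\lambda^i,\tilde\lambda^{i-1}$; consequently, if these one-step conditional laws agree for every $i\ge k$, then the joint laws of the whole output $\tilde P$ must agree as well. I would therefore enumerate the possible local configurations -- the $q$-weighted analogues of the cases underlying the classical insertion rule -- and check in each case that the probability with which the reformulation appends the new box to a given row coincides with the weight prescribed in \cite{op13}.

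The step I expect to be the main obstacle is the bookkeeping of the $q$-factors. The reformulation reorganises the order in which the random choices are made so as to fit the growth-graph framework; although this leaves the set of admissible outputs unchanged, one must confirm that the product of the local $q$-weights telescopes to the same total weight as in \cite{op13} and that the dependency structure -- which choice is conditioned on which preceding shape -- is preserved under the reordering. Once the single-letter insertion has been matched, the word case treated in the next section follows immediately: both algorithms insert the letters of the input word one at a time into the initially empty tableau in the same order, so equality of the single-step kernels yields equality of the laws of the output pair. This is precisely why the equivalence can be read off ``at the end of next section'' once both descriptions are in place.
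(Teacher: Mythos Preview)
The paper does not actually supply a proof of this proposition: it is stated immediately after the sentence ``At the end of next section it is obvious to see that:'' and is left as a self-evident consequence of writing out the two descriptions side by side. In other words, the paper's ``proof'' is the exposition in Sections~\ref{s:qrs} and~\ref{s:wordqrs} themselves, and the reader is expected to recognise that the weights $w_0(k,j_k)$ and $w_1(r,j_r)$ written there are literally the same quantities that appear in \cite{op13}, only reorganised into the triplet-branching format of Figure~\ref{fig:branch}.

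Your proposal is a correct and natural way to make this explicit, and it matches in spirit what the paper leaves implicit: cast the \cite{op13} insertion as a sequence of level-by-level moves, observe that in both descriptions the conditional law of $\tilde\lambda^i$ depends only on $(\lambda^{i-1},\lambda^i,\tilde\lambda^{i-1})$, and then check case by case that the weights coincide. The ``bookkeeping of $q$-factors'' you flag as the main obstacle is not really an obstacle here, because there is no genuine reordering or telescoping to track --- the reformulation uses exactly the same functions $f_0,f_1$ and the same products $w_0,w_1$ as \cite{op13}, merely presenting them as edge weights in a tree rather than as a sequential random procedure. So your plan is sound, but in execution it would reduce to an identity check rather than a nontrivial computation, which is why the paper is content to call it obvious.
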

When inserting a letter to a tableau it outputs a weigted set of tableaux. 
To insert a $k$ into $P$, we start with $\lambda^k$, append a box to different possible rows no lower than $k$, record the index of rows, and for each one of these indices $j_k$, we obtain a new $k$th shape $\tilde\lambda^k=\lambda^k+\ve_{j_k}$ with weight $w_0(k,j_k)$; then we proceed to add a box to all possible rows in $\lambda^{k+1}$ no lower than $j_k$ and obtain a weighted set of new $k+1$th shapes $\{(\tilde\lambda^{k+1}=\lambda^{k+1}+\ve_{j_{k+1}},w_1(k+1,j_{k+1})):j_{k+1}\le j_k\}$; then for each $j_{k+1}$ we obtain the new $k+2$th shapes with weight $w_1(k+2,j_{k+2})$ by adding a box to $j_{k+2}$th row in $\lambda^{k+2}$ and so on and so forth. 
We also prune all the 0-weighted branches.

This way we obtain a forest of trees whose roots are $\tilde\lambda^k$'s, leaves are $\tilde\lambda^\ell$'s with edges labeled by the weights.
If we prepend $\emptyset\prec\lambda^1\prec\lambda^2\prec\dots\prec\lambda^{k-1}$ to this forest we obtain a tree with root $\emptyset$, the first $k$ levels each having one branch with one edge, which we label with weight 1. 
Then for each leaf $\tilde\lambda^\ell$ we obtain a unique tableau by reading its genealogy. We also associate this tableau with a weight which is the product of weights along the edges. 
By going through all leaves we obtain a set of weighted tableaux, which is the output of $q$-inserting a $k$ to $P$.

Now we describe how we calculate the weights when inserting a box to $i$th shape $\lambda^i$ with a $j_{i-1}$ (let $j_{k-1}=k$) specified.
First let us define functions $f_0$ and $f_1$ associated with a pair of nested partitions $\mu\prec\lambda$:
\begin{align*}
  f_0(j;\mu,\lambda)=1-q^{\mu_{j-1}-\lambda_j};\qquad f_1(j;\mu,\lambda)=\frac{1-q^{\mu_{j-1}-\lambda_j}}{1-q^{\mu_{j-1}-\mu_j}};\quad 1<j\le k\\
  f_0(1;\mu,\lambda)=f_1(1;\mu,\lambda)=1.
\end{align*}
When adding a box to the first affected partition $\lambda^k$, the weight $w_0(k,j_k)$ associated with each $j_k\le k$ is
\begin{align*}
  w_0(k,j_k)=f_0(j_k;\lambda^{k-1},\lambda^k)\prod_{p=j_k+1}^k(1-f_0(p;\lambda^{k-1},\lambda^k)).
\end{align*}
For $r>k$, and for a fixed pair of $(\lambda^r,j_{r-1})$, when adding a box to row $j_r\le j_{r-1}$ of $\lambda^{r}$, the corresponding weight $w_1(r,j_r)$ is
\begin{align*}
  w_1&(r,j_r)\\
  &=\begin{cases}
    f_0(j_r;\lambda^{r-1},\lambda^r)\left(\prod_{p=j_r+1}^{j_{r-1}-1}(1-f_0(p;\lambda^{r-1},\lambda^r))\right)\\\times(1-f_1(j_{r-1};\lambda^{r-1},\lambda^r)),&1\le j_r<j_{r-1}\\
    f_1(j_r;\lambda^{r-1},\lambda^r),&j_r=j_{r-1}.
  \end{cases}
\end{align*}

For example, below is a path of the tree, i.e. genealogy of a possible output tableau when inserting a $5$ into tableau \eqref{tab:ex1}:
\begin{align*}
  \emptyset\xrightarrow{1}
  \begin{array}{cc}
    \hat1&\hat1
  \end{array}\xrightarrow{1}
  \begin{array}{cc}
    \hat1&\hat1
  \end{array}\xrightarrow{1}
  \begin{array}{ccc}
    \hat2&\hat2&\hat3\\\hat3&&
  \end{array}\xrightarrow{1}
  \begin{array}{cccc}
    \hat3&\hat3&\hat3&\hat4\\\hat3&&&
  \end{array}\xrightarrow{1-q}
  \begin{array}{cccc}
    \hat4&\hat4&\hat4&\hat4\\\hat4&\hat5&&\\\textcolor{red}{\hat5}&&&
  \end{array}\\
  \xrightarrow{\left(1-\frac{1-q}{1-q^2}\right)(1-q^2)}
  \begin{array}{cccc}
    \hat5&\hat5&\hat5&\hat5\\\hat5&\hat5&\textcolor{red}{\hat6}&\\\hat6&&&
  \end{array}\xrightarrow{1}
  \begin{array}{cccc}
    \hat6&\hat6&\hat6&\hat6\\\hat6&\hat6&\textcolor{red}{\hat7}&\\\hat6&\hat7&&
  \end{array}\xrightarrow{\frac{1-q}{1-q^2}}
  \begin{array}{cccc}
    \hat7&\hat7&\hat7&\hat7\\\hat7&\hat7&\hat8&\textcolor{red}{\hat8}\\\hat7&\hat7&&\\\hat8&\hat8&&
  \end{array}
\end{align*}
where again each $\hat r$ denotes a box in $\lambda^r$ and the \textcolor{red}{red} $\textcolor{red}{\hat r}$ denotes the new box $\tilde\lambda^r/\lambda^r$. We also have $j_5=3$, $j_6=j_7=j_8=2$ in this example.
The above output tableau is
\begin{align*}
  \tilde P=\emptyset\prec2\prec2\prec31\prec41\prec421\prec431\prec432\prec4422=
  \begin{array}{cccc}
    1&1&3&4\\3&5&6&8\\5&7&&\\8&8&&
  \end{array}
\end{align*}
with weight $1\cdot1\cdot1\cdot1\cdot(1-q)\cdot\left(1-\frac{1-q}{1-q^2}\right)(1-q^2)\cdot1\cdot\frac{1-q}{1-q^2}=\frac{q(1-q)^2}{1+q}$. For any two tableaux $P$ and $\tilde P$ we denote the weight of obtaining $\tilde P$ after inserting $k$ into $P$ by $I_k(P,\tilde P)$. So in the previous example we have $I_5(P,\tilde P)=\frac{q(1-q)^2}{1+q}$.

As we can see, each branching to obtain a weighted set of new $i$th shapes $\{\tilde\lambda^i\}$ is determined by $\lambda^i$ and $j_{i-1}$, the latter in turn is identified by the pair $(\lambda^{i-1},\tilde\lambda^{i-1})$. 
Therefore the branching is determined by the triplet $(\lambda^{i-1},\lambda^i,\tilde\lambda^{i-1})$. While each $\tilde\lambda^i$, together with $\lambda^i$ and $\lambda^{i+1}$ determines a next branching. 
Hence the tree has the structure illustrated in Figure \ref{fig:branch},
\begin{figure}[h!]
  \includegraphics[scale=.3]{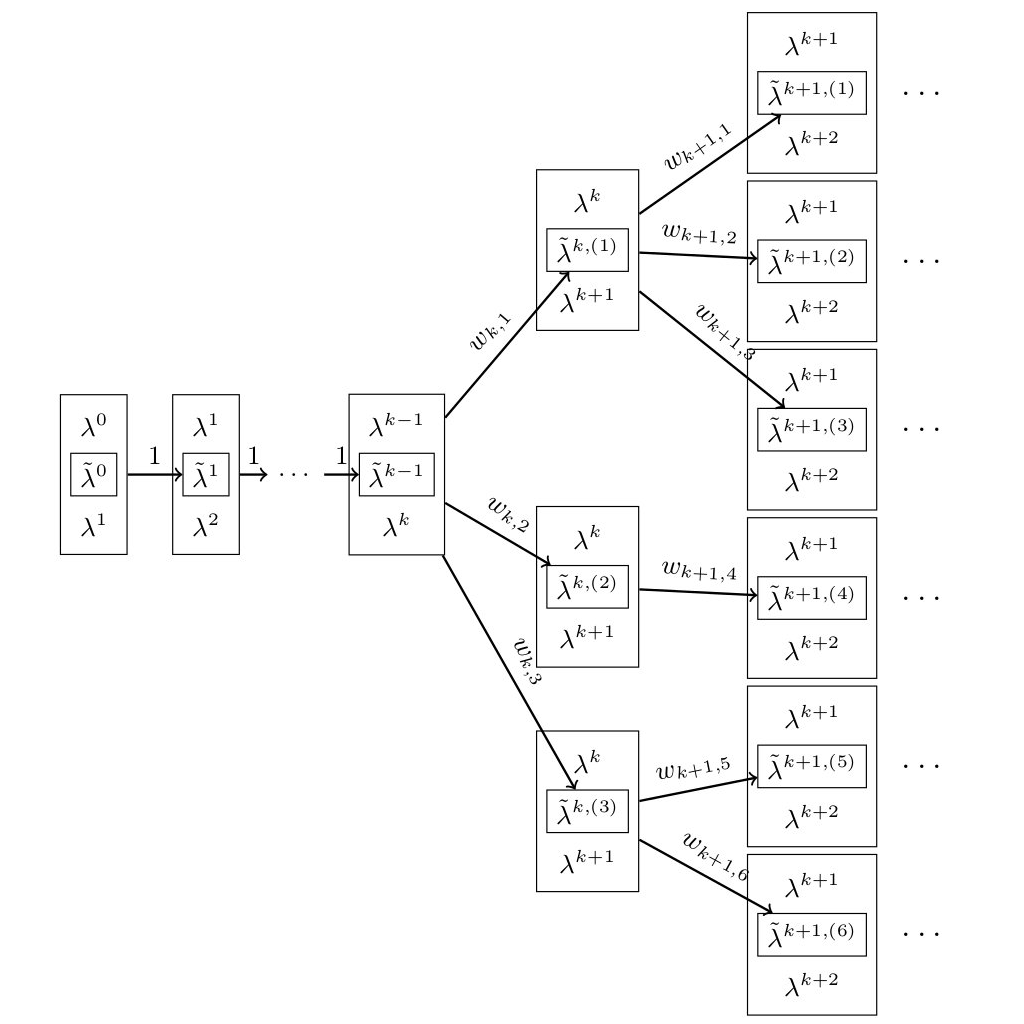}
  \caption{The structure of a branching insertion algorithm when inserting a letter $k$ (see Section \ref{s:other}, where the $q$-weighted algorithm is an example).}
  \label{fig:branch}
\end{figure}
where we put the weights on the edges. This way we can write the weights:
\begin{align*}
w((\lambda^{i-1},\lambda^i,\tilde\lambda^{i-1}),\tilde\lambda^i) =
\begin{cases}
  \mathbb I_{\tilde\lambda^i=\lambda^i} &i<k\\
  w_0(i,j_i)&i=k\\
  w_1(i,j_i)&i>k
\end{cases}
\end{align*}
where $\tilde\lambda^i =\lambda^i+\ve_{j_i}$, $j_i\le j_{i-1}$ for $i\ge k$, and $\mathbb I$ is the indicator function.

Therefore, the $q$-insertion algorithm can be visualised in a ``one-column'' graph similar to the normal insertion.
Each node $\lambda^{i}$ (or $\tilde\lambda^{j,(p_j)}$) representing the old $i$th shape (or a new $j$th shape) is associated with a vertex $(0,i)$ (or $(1,j)$) as the northwest (or northeast) vertex of the $i$th (or $j$th) box.
Each triplet $(\lambda^{i-1},\lambda^i,\tilde\lambda^{i-1,(p_{i-1})})$ is represented as a connected triple nodes surrounding the $i$th box from the south and the west, 
for which each branch $\tilde\lambda^{i,(p_i)}$ is represented as a node that connects to both $\lambda^i$ and $\tilde\lambda^{i-1,(p_{i-1})}$, 
where we put the weights on both the horizontal and vertical edges. 
\begin{align*}
  w(\tilde\lambda^{i-1,(p_{i-1})}\to \tilde \lambda^{i,(p_i)})=w(\lambda^{i}\to \tilde \lambda^{i,(p_i)})=w((\lambda^{i-1},\lambda^i,\tilde\lambda^{i-1,(p_{i-1})}),\tilde\lambda^i),\quad i\ge1.
\end{align*}
As is in the normal insertion case, we put an $X$ in the $k$th box to indicate we are inserting a $k$.
Figure \ref{fig:insertletter} is such a visualisation corresponding to the tree in Figure \ref{fig:branch}. 
\begin{figure}[h!]
  \centering
  \begin{tikzpicture}[scale=.9]
    \def \s{1}
    \def \c{6}
    \def \d{9}
    \def \e{14}
    \def \f{1.5}
    \tikzstyle{every node}=[draw,rectangle]
    \node (00) at (0,0) {$\emptyset$};
    \node (10) at (\c,0) {$\emptyset$};
    \node (01) at (0,\f) {$\lambda^1$};
    \node (11) at (\c,\f) {$\tilde\lambda^1=\lambda^1$};
    \node (02) at (0,2*\f) {$\lambda^2$};
    \node (12) at (\c,2*\f) {$\tilde\lambda^2=\lambda^2$};
    \node (03) at (0,3*\f) {$\lambda^{k-1}$};
    \node (13) at (\c,3*\f) {$\tilde\lambda^{k-1}=\lambda^{k-1}$};
    \node (04) at (0,\d) {$\lambda^{k}$};
    \node (143) at (\c-\s,\d-\s) {$\tilde\lambda^{k,(3)}$};
    \node (142) at (\c,\d) {$\tilde\lambda^{k,(2)}$};
    \node (141) at (\c+\s,\d+\s) {$\tilde\lambda^{k,(1)}$};
    \node (05) at (0,\e) {$\lambda^{k+1}$};
    \node (156) at (\c-2*\s,\e-2*\s) {$\tilde\lambda^{k+1,(6)}$};
    \node (155) at (\c-1*\s,\e-1*\s) {$\tilde\lambda^{k+1,(5)}$};
    \node (154) at (\c,\e) {$\tilde\lambda^{k+1,(4)}$};
    \node (153) at (\c+\s,\e+\s) {$\tilde\lambda^{k+1,(3)}$};
    \node (152) at (\c+2*\s,\e+2*\s) {$\tilde\lambda^{k+1,(2)}$};
    \node (151) at (\c+3*\s,\e+3*\s) {$\tilde\lambda^{k+1,(1)}$};

    \tikzstyle{every node}=[sloped,fill=white,draw=none,font=\small]
    \foreach \i in {0,...,3}
      \draw (0\i) -- node{$1$} (1\i);
    \foreach \i [count=\ii] in {0,...,4}
    {
    \ifthenelse{\NOT 2 = \i}{\draw (0\i) -- (0\ii);}{}
    }
    \foreach \i [count=\ii] in {0,...,1}
      \draw (1\i) -- node{$1$} (1\ii);
    \draw[dotted,thick] (02) -- (03);
    \draw[dotted,thick] (12) -- (13);
    \draw (13) -- node{$w_{k,1}$} (141);
    \draw (13) -- node{$w_{k,2}$} (142);
    \draw (13) -- node{$w_{k,3}$} (143);
    \draw (141) -- node{$w_{k+1,1}$} (151);
    \draw (141) -- node{$w_{k+1,2}$} (152);
    \draw (141) -- node{$w_{k+1,3}$} (153);
    \draw (142) -- node{$w_{k+1,4}$} (154);
    \draw (143) -- node{$w_{k+1,5}$} (155);
    \draw (143) -- node{$w_{k+1,6}$} (156);
    \draw (04) -- node{$w_{k,1}$} (141);
    \draw (04) -- node{$w_{k,2}$} (142);
    \draw (04) -- node{$w_{k,3}$} (143);
    \draw (05) -- node{$w_{k+1,1}$} (151);
    \draw (05) -- node{$w_{k+1,2}$} (152);
    \draw (05) -- node{$w_{k+1,3}$} (153);
    \draw (05) -- node{$w_{k+1,4}$} (154);
    \draw (05) -- node{$w_{k+1,5}$} (155);
    \draw (05) -- node{$w_{k+1,6}$} (156);

    \node at (3,6.5) [font=\Huge]{$X$};
  \end{tikzpicture}
  \caption{The one-column construction of $q$-inserting a letter to a tableau.}
  \label{fig:insertletter}
\end{figure}
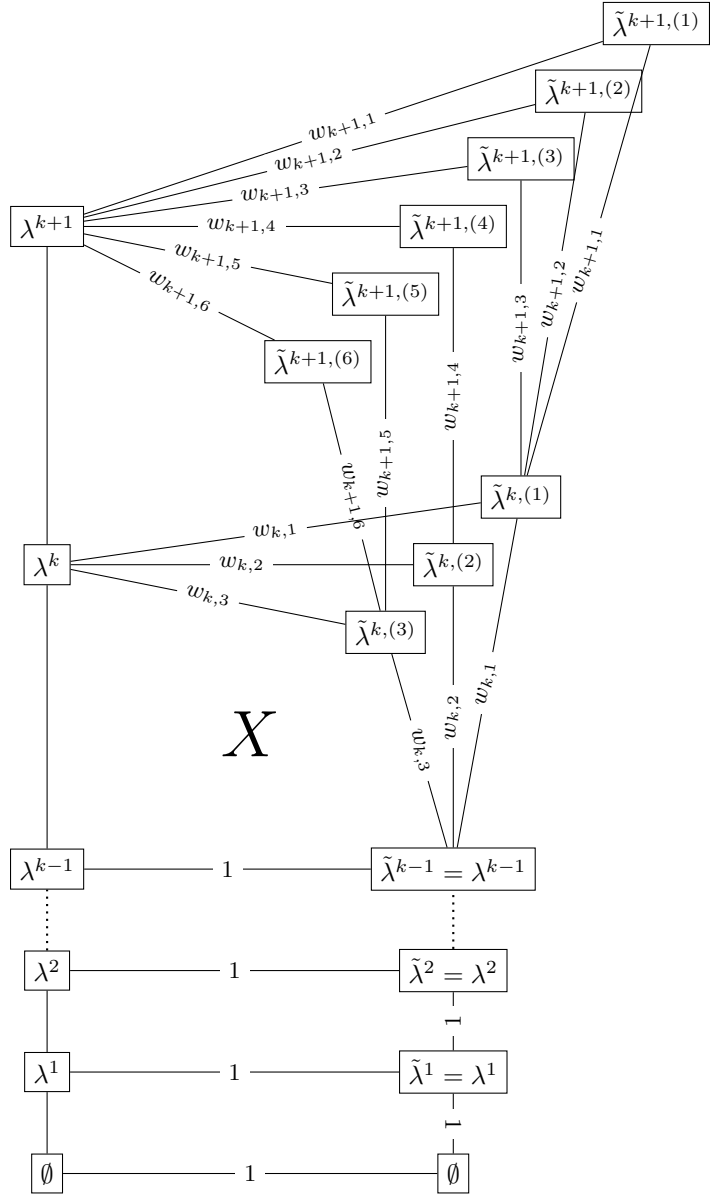
All the nodes associated with the right column $((i,0):0\le i\le\ell)$ form a tree, such that each $\tilde P$ in the output corresponds to a genealogy of a node associated with $(1,\ell)$, whose weight $I_k(P,\tilde P)$ can be obtained as the product of the weights along the genealogical line of the tree.
\section{Word input for the $q$-weighted Robinson-Schensted algorithm}\label{s:wordqrs}
We can also take a word as input for the $q$-weighted Robinson-Schensted algorithm and produce a set of weighted pairs of tableaux. 
We start with the set of only one pair of tableaux $(P(0),Q^0)=(\emptyset,\emptyset)$ with weight 1.
Suppose at time $m$, we have obtained a set of weighted pairs of tableaux 
$$\{(P(m)^{(1)},(Q^m)^{(1)},w(m)^{(1)}),\cdots,(P(m)^{(p)},(Q^m)^{(p)},w(m)^{(p)})\}.$$ We use brackets in superscripts to indicate the different possibilities, in order not to confuse with subtableaux or shapes.
We take each triplet \\$(P(m)^{(i)},(Q^m)^{(i)},w(m)^{(i)})$ in the collection, insert $w_{m+1}$ into $P(m)^{(i)}$ to produce a new set of $P$-tableaux paired with the weight generated during the insertion
$$\{(P(m+1)^{(i,1)},w^{(i,1)}),(P(m+1)^{(i,2)},w^{(i,2)}),\cdots,(P(m+1)^{(i,r_i)},w^{(i,r_i)})\}.$$
Then for each $P(m+1)^{(i,j)}$, $1\le j\le r_i$, we pair it with a $(Q^{m+1})^{(i,j)}$ by adding a box with entry $m+1$ to $(Q^m)^{(i)}$ such that $\sh P(m+1)^{(i,j)}=\sh (Q^{m+1})^{(i,j)}$. We also attach a weight $w(m+1)^{(i,j)}=w(m)w^{(i,j)}$ to this pair.
This way, for each triplet $(P(m)^{(i)},(Q^m)^{(i)},w(m)^{(i)})$ we have obtained its branching set 
\begin{align*}
&\{(P(m+1)^{(i,1)},(Q^{m+1})^{(i,1)},w(m+1)^{(i,1)}),\\
&\qquad(P(m+1)^{(i,2)},(Q^{m+1})^{(i,2)},w(m+1)^{(i,2)}), \\
&\qquad\qquad\qquad\qquad\qquad\cdots,(P(m+1)^{(i,r_i)},(Q^{m+1})^{(i,r_i)},w(m+1)^{(i,r_i)})\}.
\end{align*}
Let $i$ run over $1,2,\cdots,p$, we obtained a collection of all possible branchings:
$$((P(m+1)^{(i,j)},(Q^{m+1})^{(i,j)},w(m+1)^{(i,j)}))_{1\le i\le p,1\le j\le r_i}.$$
Note by \textit{collection} we mean a vector of objects that allows repeats, as opposed to a \textit{set}.
We merge triplets with the same tableau pair by adding up their weights. This way we have obtained the weighted set of pairs of tableaux at time $m+1$.

The output of inserting the word $w_1w_2\dots w_n$ is defined as the weighted set at time $n$. We denote by $\phi_w(P,Q)$ the weight of pair $(P,Q)$ in this set.
This is defined recursively by the following formula (see \cite{op13}). For $P$ and $Q$ with the same shape of size $n$ and word $w$ of length $n-1$,
\begin{align*}
  \phi_{wk}(P,Q)=\sum\phi_w(\hat P,Q^{n-1})I_k(\hat P,P),
\end{align*}
where the sum is over all tableau $\hat P$ with the same shape as $Q^{n-1}$.

This, like in the classical case, can be visualised as a graph whose nodes are associated with vertices on $D_{n,\ell}:=\{0,\dots,n\}\times \{0,\dots,\ell\}$ lattice diagram by concatenating the graphs of the one-column construction in Figure \ref{fig:insertletter} associated with the insertion of each letter. Each possible $\lambda^{k,(p_{m,k})}(m)$ is associated with the vertex $(m,k)$.
For example, we obtain the following graph if we apply the algorithm to $2132\in[3]^4$, where the unlabeled edges have weight 1, which will be the case hereafter.
\begin{center}
  \begin{tikzpicture}[scale=.9]
    \def \lh{{0,1.5,3,5.5,11}}
    \def \lv{{0,1.5,4,9.5}}
    \def \s{1}

    \tikzstyle{every node}=[draw]
    \foreach \i in {0,...,4}
    \node (\i0) at (\lh[\i],\lv[0]) {$\emptyset$};
    \foreach \i in {0,...,3}
    \node (0\i) at (\lh[0],\lv[\i]) {$\emptyset$};
    \node (11) at (\lh[1],\lv[1]) {$\emptyset$};
    \foreach \i in {2,...,4}
    \node (\i1) at (\lh[\i],\lv[1]) {$1$};
    \foreach \i in {2,...,3}
    {
    \node (1\i) at (\lh[1],\lv[\i]) {$1$};
    \node (2\i) at (\lh[2],\lv[\i]) {$2$};
    }
    \node (32) at (\lh[3],\lv[2]) {$2$};
    \node (331) at (\lh[3],\lv[3]) {$21$};
    \node (332) at (\lh[3]-\s,\lv[3]-\s) {$3$};
    \node (421) at (\lh[4],\lv[2]) {$21$};
    \node (422) at (\lh[4]-\s,\lv[2]-\s) {$3$};
    \node (431) at (\lh[4]+\s,\lv[3]+\s) {$22$};
    \node (432) at (\lh[4]-0.3*\s,\lv[3]-0.3*\s) {$31$};
    \node (433) at (\lh[4]-\s,\lv[3]-\s) {$31$};
    \node (434) at (\lh[4]-2*\s,\lv[3]-2*\s) {$31$};
    \node (435) at (\lh[4]-3*\s,\lv[3]-3*\s) {$4$};

    \tikzstyle{every node}=[]
    \node at (\lh[0]/2+\lh[1]/2,\lv[1]/2+\lv[2]/2) {$X$};
    \node at (\lh[1]/2+\lh[2]/2,\lv[0]/2+\lv[1]/2) {$X$};
    \node at (\lh[2]/2+\lh[3]/2,\lv[2]/2+\lv[3]/2) {$X$};
    \node at (\lh[3]/2+\lh[4]/2,\lv[1]/2+\lv[2]/2) {$X$};

    \tikzstyle{every node}=[fill=white,sloped,font=\small]
    \foreach \i [count=\ii] in {0,...,3}
    {
      \draw (\i0) --  (\ii0);
      \draw (\i1) --  (\ii1);
    }
    \foreach \i [count=\ii] in {0,...,2}
    {
      \draw (0\i) --  (0\ii);
      \draw (1\i) --  (1\ii);
      \draw (2\i) --  (2\ii);
    }
    \foreach \i [count=\ii] in {0,1}
    {
      \draw (\i2) --  (\ii2);
      \draw (\i3) --  (\ii3);
    }
    \draw (30) --  (31);
    \draw (40) --  (41);
    \draw (31) --  (32);
    \draw (22) --  (32);
    \draw (32) -- node{$1-q^2$} (331);
    \draw (32) -- node{$q^2$} (332);
    \draw (23) -- node{$1-q^2$} (331);
    \draw (23) -- node{$q^2$} (332);
    \draw (41) -- node{$1-q$} (421);
    \draw (41) -- node{$q$} (422);
    \draw (32) -- node{$1-q$} (421);
    \draw (32) -- node{$q$} (422);
    \draw (421) -- node{$(1-q)/(1-q^2)$} (431);
    \draw (421) -- node{$1-(1-q)/(1-q^2)$} (432);
    \draw (331) -- node{$(1-q)/(1-q^2)$} (431);
    \draw (331) -- node{$1-(1-q)/(1-q^2)$} (432);
    \draw (421) --  (434);
    \draw (332) --  (434);
    \draw (422) --  (433);
    \draw (331) --  (433);
    \draw (422) --  (435);
    \draw (332) --  (435);

  \end{tikzpicture}
\end{center}
From the definition of the algorithm, each node $\lambda^{\ell,(p)}(n)$ at $(n,\ell)$ corresponds to a $(P,Q)$ tableau pair in the output set. 
Indeed we can trace back the genealogy of $\lambda^{\ell,(p)}(n)$ and find a unique array of indices $(p_{m,k})_{0\le m\le n,0\le k\le \ell}$ with $p_{n,\ell}=p$ such that nodes $(\lambda^{k,(p_{m,k})}(m))_{m,k}$ are connected and form a diagram that is isomorphic to the lattice diagram $D_{n,\ell}$.
Clearly,
\begin{align*}
  P^{(p)}=\lambda^{0,(p_{n,0})}(n)\prec\lambda^{1,(p_{n,1})}(n)\prec\dots\prec\lambda^{\ell,(p_{n,\ell})}(n);\\
  Q^{(p)}=\lambda^{\ell,(p_{0,\ell})}(0)\prec\lambda^{\ell,(p_{1,\ell})}(1)\prec\dots\prec\lambda^{\ell,(p_{n,\ell})}(n),
\end{align*}
are the corresponding $(P,Q)$ tableau pair. Moreover, we can identify a weight with the node $\lambda^{\ell,(p)}$ by multiplying the weights of all the horizontal (or all the vertical) edges along its genealogical diagram and denote it by $w(\lambda^{\ell,(p)})$:
\begin{align*}
  w(\lambda^{\ell,(p)}(n))=\prod_{1\le m\le n, 0\le k\le \ell} w(\lambda^{k,(p_{m-1,k})}(m-1)\to\lambda^{k,(p_{m,k})}(m))\\
  =\prod_{0\le m\le n, 1\le k\le \ell} w(\lambda^{k-1,(p_{m,k-1})}(m)\to\lambda^{k,(p_{m,k-1})}(m)).
\end{align*}
Then by definition,
\begin{align*}
  \phi_w(P,Q)=\sum w(\lambda^{\ell,(p)}(n))\mathbb I_{P^{(p)}=P,Q^{(p)}=Q},
\end{align*}
where the sum is over all nodes $\lambda^{l,(p)}(n)$ at vertex $(n,l)$.
\section{The symmetry property for the $q$-weighted RS algorithm with permutation input}\label{s:symq}
When we take a permutation input $\sigma$ which is identified as a word with distinct letters in the same way as in the normal RS algorithm, we also end up with a symmetry property, which is the main result of this paper:
\begin{thm}\label{thm:main}
For any permutation $\sigma\in S_n$ and standard tableau pair $(P,Q)$,
\begin{align*}
  \phi_{\sigma^{-1}}(Q,P)=\phi_\sigma(P,Q).
\end{align*}
\end{thm}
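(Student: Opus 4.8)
The plan is to prove this by lifting the growth-diagram argument behind Theorem~\ref{thm:1} to the weighted ``growth graph'' of Section~\ref{s:wordqrs}. For a permutation $\sigma\in S_n$ I would work on the square lattice $D_{n,n}$, placing an $X$ in the box $(m,\sigma(m))$ for each $m$ and imposing the boundary label $\emptyset$ along the bottom and left edges. The first task is to rewrite $\phi_\sigma(P,Q)$ as a sum over labellings of this lattice of a product of purely local box weights. Since the total weight $w(\lambda^{\ell,(p)}(n))$ of a genealogy is the product of its horizontal edge weights, and each horizontal edge at level $k\ge 1$ is the top edge of a unique box carrying the weight $w((\lambda^{k-1},\lambda^k,\tilde\lambda^{k-1}),\tilde\lambda^k)$, this product is in fact a product over boxes. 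Writing $\lambda,\mu^1,\mu^2,\nu$ for the labels at the SW, NW, SE, NE corners of a box as in Theorem~\ref{thm:1}, I would package this as a local weight $W(\lambda,\mu^1,\mu^2,\nu;X)$ and obtain
\[
\phi_\sigma(P,Q)=\sum \prod_{\text{boxes}} W(\lambda,\mu^1,\mu^2,\nu;X),
\]
the sum being over all labellings of $D_{n,n}$ compatible with the boundary and the $X$'s, with $P$ read off the right edge and $Q$ off the top edge.

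The heart of the argument is the local symmetry
\[
W(\lambda,\mu^1,\mu^2,\nu;X)=W(\lambda,\mu^2,\mu^1,\nu;X),
\]
i.e.\ that each box weight is invariant under the reflection swapping its NW and SE corners. I would prove this by classifying, exactly as in the five cases of Theorem~\ref{thm:1}, the corner configurations that can occur. The key observation is that, because $\sigma$ is a permutation, adjacent labels differ by at most one box, and in every \emph{branching} configuration the two side corners coincide: the $X$-box has $\lambda=\mu^1=\mu^2$ (the value $\sigma(m)$ has not appeared before, so no box has yet been added at its level) and carries the weights $w_0(\sigma(m),\cdot)$; the only other branching box is the analogue of Case~5, where $\mu^1=\mu^2=\lambda+\ve_i$ and the weights are $w_1$. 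In both of these the reflection acts trivially since $\mu^1=\mu^2$. In all remaining configurations (the analogues of Cases~2, 3 and~4) one checks directly from the formulas for $f_0,f_1$ that the box weight is deterministic and equal to $1$, and that the reflection merely interchanges the Case~2 and Case~3 shapes (and fixes Case~4); hence it is preserved.

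Before this case check can even be phrased, I must verify that $W$ is genuinely a function of the four corners and the $X$-status \emph{alone}, with no residual dependence on the level $k$. The functions $w_1,f_0,f_1$ manifestly depend only on the corner partitions, but the $X$-box weight $w_0(k,j_k)=f_0(j_k;\lambda^{k-1},\lambda^k)\prod_{p=j_k+1}^{k}(1-f_0(p;\lambda^{k-1},\lambda^k))$ carries $k$ as the upper limit of its product. At an $X$-box $\lambda^{k-1}=\lambda^k=\lambda$, so $f_0(p;\lambda,\lambda)=1-q^{\lambda_{p-1}-\lambda_p}$ vanishes for every $p>l(\lambda)+1$; since the subtableau at that level contains fewer than $\min(m,\sigma(m))$ boxes we have $l(\lambda)+1\le\min(m,\sigma(m))$, so every factor beyond $l(\lambda)+1$ equals $1$ and the upper limit may be replaced by $l(\lambda)+1$. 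This gives $w_0(\sigma(m),\cdot)=w_0(m,\cdot)$, so $W$ is well defined from the corners. I expect this ``trailing ones'' computation, together with the bookkeeping that puts $\phi_\sigma$ into product-of-local-weights form, to be the main technical obstacle; the symmetry itself is then almost immediate.

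With these pieces in hand I would conclude exactly as in Theorem~\ref{thm:1}: reflecting a labelling of $D_{n,n}$ across the main diagonal and swapping NW$\leftrightarrow$SE in every box sends the $X$'s at $(m,\sigma(m))$ to $X$'s at $(\sigma(m),m)$, i.e.\ to the $X$-pattern of $\sigma^{-1}$, interchanges the right and top boundary edges (hence the roles of $P$ and $Q$), preserves the $\emptyset$ boundary, and by the local symmetry preserves the product of box weights. This reflection is therefore a weight-preserving bijection between labellings, and summing over them yields $\phi_{\sigma^{-1}}(Q,P)=\phi_\sigma(P,Q)$.
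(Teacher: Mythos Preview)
Your proposal is correct and follows essentially the same route as the paper's proof: recast the algorithm as a local growth-graph rule on $D_{n,n}$, classify the possible corner configurations into the five cases of Theorem~\ref{thm:1}, verify that in each case the branching set and weights depend only on $(\lambda,\mu^1,\mu^2,\nu)$ and the $X$-status and are symmetric under $\mu^1\leftrightarrow\mu^2$, and conclude by the diagonal reflection. Your ``trailing ones'' computation is exactly the content of the paper's Case~1 calculation (where the product telescopes to $q^{\lambda_j}$ because $\lambda_k=0$), and your treatment of Cases~2--5 matches the paper's verification that the insertion rule reproduces the stated growth-graph weights.
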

As in the normal RS algorithm case, permutation input are special in that we can restate the insertion rule in a symmetric fashion. For each box we pick a connected triplet on southwest, northwest and southeast corners. 
That is, suppose the box has index $(m,k)$ we fix arbitrary $p_1$, $p_2$ and $p_3$ such that $(\lambda,\mu^1,\mu^2):=(\lambda^{k-1,(p_1)}(m-1),\lambda^{k,(p_2)}(m-1),\lambda^{k-1,(p_3)}(m))$ is a connected triplet. 
We denote by $N$ the set of partitions on the vertex $(m,k)$ that are connected to $\mu^1$ and $\mu^2$. And for any $\nu\in N$, we write $w(\mu,\nu)$ instead of $w(\mu^1\to\nu)$ or $w(\mu^2\to\nu)$ since they are equal, and for the sake of symmetry.
Define an operator $I^k:W\to W$ by
\begin{align*}
  I^k\lambda:=\lambda+\ve_{\max\{j\le k:\lambda+\ve_j\in W\}}.
\end{align*}
and a set $\Lambda^k(\lambda)\subset W$ by
\begin{align*}
  \Lambda^k(\lambda):=\{I^j\lambda:j\le k\},
\end{align*}
and denote $\Lambda(\lambda):=\Lambda^{l(\lambda)+1}(\lambda)$.

Then $N$ and $(w(\mu,\nu):\nu\in N)$ belongs to one of the following 5 cases.
\begin{enumerate}
  \item The box has an $X$ in it, and $\mu^1$, $\mu^2$ are equal to $\lambda$. Then $N$ consists of all possible partitions obtained by adding a box to $\lambda$:
    \begin{align*}
      N=\Lambda(\lambda).
    \end{align*}
    The weights are
    \begin{align*}
      w(\mu,\nu)=\begin{cases}
	q^{\lambda_j}-q^{\lambda_{j-1}},&\text{if }\nu=\lambda+\ve_j\text{ for some }j>1\\
	q^{\lambda_1},&\text{if }\nu=\lambda+\ve_1
      \end{cases},\quad \nu\in N.
    \end{align*}
  \item The box does not have an $X$ in it and $\mu^1=\lambda$. Then $N$ is a singleton which is the same as $\mu^2$,
    and the weight is 1:
    \begin{align*}
      w(\mu,\nu)=1,\quad \nu\in N=\{\mu^2\}.
    \end{align*}
  \item (The dual case of case 2)The box does not have an $X$ in it and $\mu^2=\lambda$. Then $N$ is a singleton which is the same as $\mu^1$,
    and the weight is 1:
    \begin{align*}
      w(\mu,\nu)=1,\quad \nu\in N=\{\mu^1\}.
    \end{align*}
  \item The box is empty. $\mu^1=\lambda+\ve_i$ and $\mu^2=\lambda+\ve_j$ for some $i\neq j$. Then $N$ again only contains one element $\mu^1\cup\mu^2$,
    with weight 1:
    \begin{align*}
      w(\mu,\nu)=1,\quad \nu\in N=\{\mu^1\cup\mu^2\}.
    \end{align*}
  \item The box is empty and $\mu^1=\mu^2=\lambda+\ve_i:=\mu$ for some $i$. Then $N$ consists of all possible partitions that are obtained by adding a box to a row no lower than $i$th row of $\mu$. That is
    \begin{align*}
      N=\Lambda^i(\mu).
    \end{align*}
    The weights are
    \begin{align*}
      &w(\mu,\nu)\\
      &=\begin{cases}
	\frac{1-q^{\lambda_{i-1}-\lambda_i-1}}{1-q^{\lambda_{i-1}-\lambda_i}},&\nu=\mu+\ve_i;\\
	\frac{1-q}{1-q^{\lambda_{i-1}-\lambda_i}}q^{\lambda_j-\lambda_i-1}(1-q^{\lambda_{j-1}-\lambda_j}),&\nu=\mu+\ve_j\text{ for some }2\le j< i;\\
	\frac{1-q}{1-q^{\lambda_{i-1}-\lambda_i}}q^{\lambda_1-\lambda_i-1},&\nu=\mu+\ve_1.
      \end{cases}\\&\qquad\qquad\qquad\qquad\qquad\qquad\qquad\qquad\qquad\qquad\qquad\qquad\qquad\qquad,\nu\in N.
    \end{align*}
\end{enumerate}
We call this the \textit{growth graph rule} as opposed to the \textit{insertion rule} described in Sections \ref{s:qrs} and \ref{s:wordqrs}.
Below is a group of illustrations of all 5 cases, where $\lambda$ is the southwest partition.
\begin{center}
  \begin{tikzpicture}[scale=4]
    \def \s{.3}
    \node (100) at (0,0) {
    \begin{tikzpicture}[scale=.3]
      \draw (0,0) -- (4,0) -- (4,-1) -- (2,-1) -- (2,-2) -- (0,-2) -- (0,0);
    \end{tikzpicture}};
    \node (110) at (1,0) {
    \begin{tikzpicture}[scale=.3]
      \draw (0,0) -- (4,0) -- (4,-1) -- (2,-1) -- (2,-2) -- (0,-2) -- (0,0);
    \end{tikzpicture}};
    \node (101) at (0,1) {
    \begin{tikzpicture}[scale=.3]
      \draw (0,0) -- (4,0) -- (4,-1) -- (2,-1) -- (2,-2) -- (0,-2) -- (0,0);
    \end{tikzpicture}};
    \node (1111) at (1+\s,1+\s) {
    \begin{tikzpicture}[scale=.3]
      \draw (0,0) -- (4,0) -- (4,-1) -- (2,-1) -- (2,-2) -- (0,-2) -- (0,0);
      \draw (1,-2) -- (1,-3) -- (0,-3) -- (0,-2);
    \end{tikzpicture}};
    \node (1112) at (1,1) {
    \begin{tikzpicture}[scale=.3]
      \draw (0,0) -- (4,0) -- (4,-1) -- (2,-1) -- (2,-2) -- (0,-2) -- (0,0);
      \draw (3,-1) -- (3,-2) -- (2,-2) -- (2,-1);
    \end{tikzpicture}};
    \node (1113) at (1-\s,1-\s) {
    \begin{tikzpicture}[scale=.3]
      \draw (0,0) -- (4,0) -- (4,-1) -- (2,-1) -- (2,-2) -- (0,-2) -- (0,0);
      \draw (5,0) -- (5,-1) -- (4,-1) -- (4,0) -- (5,0);
    \end{tikzpicture}};
    \tikzstyle{every node}=[sloped,fill=white]
    \draw (100) -- node{$w(\lambda,\mu^1)$} (101) -- node{$1-q^{\lambda_2}$} (1111) -- node{$1-q^{\lambda_2}$} (110) -- node{$w(\lambda,\mu^2)$} (100);
    \draw (101) -- node{$q^{\lambda_2}-q^{\lambda_1}$} (1112) -- node{$q^{\lambda_2}-q^{\lambda_1}$} (110);
    \draw (101) -- node{$q^{\lambda_1}$} (1113) -- node{$q^{\lambda_1}$} (110);
    \node at (.5,.5) {$X$};
    \node at (.5,-.1) {Case 1};
  \end{tikzpicture}
  \begin{tikzpicture}[scale=3.3]
      \node (200) at (0,0) {
      \begin{tikzpicture}[scale=.3]
	\draw (0,0) -- (5,0) -- (5,-1) -- (3,-1) -- (3,-3) -- (2,-3) -- (2,-4) -- (0,-4) -- (0,0);
      \end{tikzpicture}};
      \node (210) at (1,0) {
      \begin{tikzpicture}[scale=.3]
	\draw (0,0) -- (5,0) -- (5,-1) -- (3,-1) -- (3,-3) -- (2,-3) -- (2,-4) -- (0,-4) -- (0,0);
	\draw (3,-3) -- (3,-4) -- (2,-4);
      \end{tikzpicture}};
      \node (201) at (0,1) {
      \begin{tikzpicture}[scale=.3]
	\draw (0,0) -- (5,0) -- (5,-1) -- (3,-1) -- (3,-3) -- (2,-3) -- (2,-4) -- (0,-4) -- (0,0);
      \end{tikzpicture}};
      \node (211) at (1,1) {
      \begin{tikzpicture}[scale=.3]
	\draw (0,0) -- (5,0) -- (5,-1) -- (3,-1) -- (3,-3) -- (2,-3) -- (2,-4) -- (0,-4) -- (0,0);
	\draw (3,-3) -- (3,-4) -- (2,-4);
      \end{tikzpicture}};
      \tikzstyle{every node}=[sloped,fill=white,font=\small]
      \draw (200) -- node{$w(\lambda,\mu^1)$} (201) --  (211) --  (210) -- node{$w(\lambda,\mu^2)$} (200);
      \node at (.5,-.4) {Case 2};
  \end{tikzpicture}
  \begin{tikzpicture}[scale=3.3]
    \node (100) at (0,0) {
    \begin{tikzpicture}[scale=.3]
      \draw (0,0) -- (5,0) -- (5,-1) -- (3,-1) -- (3,-3) -- (2,-3) -- (2,-4) -- (0,-4) -- (0,0);
    \end{tikzpicture}};
    \node (110) at (1,0) {
    \begin{tikzpicture}[scale=.3]
      \draw (0,0) -- (5,0) -- (5,-1) -- (3,-1) -- (3,-3) -- (2,-3) -- (2,-4) -- (0,-4) -- (0,0);
    \end{tikzpicture}};
    \node (101) at (0,1) {
    \begin{tikzpicture}[scale=.3]
      \draw (0,0) -- (5,0) -- (5,-1) -- (3,-1) -- (3,-3) -- (2,-3) -- (2,-4) -- (0,-4) -- (0,0);
      \draw (5,0) -- (6,0) -- (6,-1) -- (5,-1);
    \end{tikzpicture}};
    \node (111) at (1,1) {
    \begin{tikzpicture}[scale=.3]
      \draw (0,0) -- (5,0) -- (5,-1) -- (3,-1) -- (3,-3) -- (2,-3) -- (2,-4) -- (0,-4) -- (0,0);
      \draw (5,0) -- (6,0) -- (6,-1) -- (5,-1);
    \end{tikzpicture}};
    \tikzstyle{every node}=[sloped,fill=white,font=\small]
    \draw (100) -- node{$w(\lambda,\mu^1)$} (101) --  (111) --  (110) -- node{$w(\lambda,\mu^2)$} (100);
    \node at (.5,-.5/3) {Case 3};
  \end{tikzpicture}
  \begin{tikzpicture}[scale=3.3]
      \node (200) at (0,0) {
      \begin{tikzpicture}[scale=.3]
	\draw (0,0) -- (5,0) -- (5,-1) -- (3,-1) -- (3,-3) -- (2,-3) -- (2,-4) -- (0,-4) -- (0,0);
      \end{tikzpicture}};
      \node (210) at (1,0) {
      \begin{tikzpicture}[scale=.3]
	\draw (0,0) -- (5,0) -- (5,-1) -- (3,-1) -- (3,-3) -- (2,-3) -- (2,-4) -- (0,-4) -- (0,0);
	\draw (1,-4) -- (1,-5) -- (0,-5) -- (0,-4);
      \end{tikzpicture}};
      \node (201) at (0,1) {
      \begin{tikzpicture}[scale=.3]
	\draw (0,0) -- (5,0) -- (5,-1) -- (3,-1) -- (3,-3) -- (2,-3) -- (2,-4) -- (0,-4) -- (0,0);
	\draw (4,-1) -- (4,-2) -- (3,-2);
      \end{tikzpicture}};
      \node (211) at (1,1) {
      \begin{tikzpicture}[scale=.3]
	\draw (0,0) -- (5,0) -- (5,-1) -- (3,-1) -- (3,-3) -- (2,-3) -- (2,-4) -- (0,-4) -- (0,0);
	\draw (1,-4) -- (1,-5) -- (0,-5) -- (0,-4);
	\draw (4,-1) -- (4,-2) -- (3,-2);
      \end{tikzpicture}};
      \tikzstyle{every node}=[sloped,fill=white,font=\small]
      \draw (200) -- node{$w(\lambda,\mu^1)$} (201) --  (211) --  (210) -- node{$w(\lambda,\mu^2)$} (200);
      \node at (.5,-.5/3) {Case 4};
    \end{tikzpicture}
  \begin{tikzpicture}[scale=6]
    \def \s{.3}
      \node (200) at (0,0) {
      \begin{tikzpicture}[scale=.3]
	\draw (0,0) -- (5,0) -- (5,-1) -- (3,-1) -- (3,-2) -- (1,-2) -- (1,-3) -- (0,-3) -- (0,0);
      \end{tikzpicture}};
      \node (210) at (1,0) {
      \begin{tikzpicture}[scale=.3]
	\draw (0,0) -- (5,0) -- (5,-1) -- (3,-1) -- (3,-2) -- (1,-2) -- (1,-3) -- (0,-3) -- (0,0);
	\draw (2,-2) -- (2,-3) -- (1,-3) -- (1,-2);
      \end{tikzpicture}};
      \node (201) at (0,1) {
      \begin{tikzpicture}[scale=.3]
	\draw (0,0) -- (5,0) -- (5,-1) -- (3,-1) -- (3,-2) -- (1,-2) -- (1,-3) -- (0,-3) -- (0,0);
	\draw (2,-2) -- (2,-3) -- (1,-3) -- (1,-2);
      \end{tikzpicture}};
      \node (2111) at (1+\s,1+\s) {
      \begin{tikzpicture}[scale=.3]
	\draw (0,0) -- (5,0) -- (5,-1) -- (3,-1) -- (3,-2) -- (1,-2) -- (1,-3) -- (0,-3) -- (0,0);
	\draw (2,-2) -- (2,-3) -- (1,-3) -- (1,-2);
	\draw (3,-2) -- (3,-3) -- (2,-3) -- (2,-2);
      \end{tikzpicture}};
      \node (2112) at (1,1) {
      \begin{tikzpicture}[scale=.3]
	\draw (0,0) -- (5,0) -- (5,-1) -- (3,-1) -- (3,-2) -- (1,-2) -- (1,-3) -- (0,-3) -- (0,0);
	\draw (2,-2) -- (2,-3) -- (1,-3) -- (1,-2);
	\draw (4,-1) -- (4,-2) -- (3,-2) -- (3,-1);
      \end{tikzpicture}};
      \node (2113) at (1-\s+.05,1-\s-.05) {
      \begin{tikzpicture}[scale=.3]
	\draw (0,0) -- (5,0) -- (5,-1) -- (3,-1) -- (3,-2) -- (1,-2) -- (1,-3) -- (0,-3) -- (0,0);
	\draw (2,-2) -- (2,-3) -- (1,-3) -- (1,-2);
	\draw (5,0) -- (6,0) -- (6,-1) -- (5,-1);
      \end{tikzpicture}};
      \node at (.5,-.125) {Case 5};
      \tikzstyle{every node}=[sloped,fill=white,font=\tiny]
      \draw (200) --node{$w(\lambda,\mu^1)$}  (201) -- node{$\frac{1-q^{\lambda_2-\lambda_3-1}}{1-q^{\lambda_2-\lambda_3}}$} (2111) -- node{$\frac{1-q^{\lambda_2-\lambda_3-1}}{1-q^{\lambda_2-\lambda_3}}$} (210) -- node{$w(\lambda,\mu^2)$} (200);
      \draw (201) -- node{$\frac{(1-q)q^{\lambda_2-\lambda_3-1}(1-q^{\lambda_1-\lambda_2})}{1-q^{\lambda_2-\lambda_3}}$} (2112) -- node{$\frac{(1-q)q^{\lambda_2-\lambda_3-1}(1-q^{\lambda_1-\lambda_2})}{1-q^{\lambda_2-\lambda_3}}$} (210) ;
      \draw (201) -- node{$\frac{q^{\lambda_1-\lambda_3-1}-q^{\lambda_1-\lambda_3}}{1-q^{\lambda_2-\lambda_3}}$} (2113) -- node{$\frac{q^{\lambda_1-\lambda_3-1}-q^{\lambda_1-\lambda_3}}{1-q^{\lambda_2-\lambda_3}}$} (210) ;
    \end{tikzpicture}
\end{center}
\begin{proof}[Proof of Theorem \ref{thm:main}]
  It now suffices to show that the above constuction agrees with the definition of the algorithm in the preceeding section. 
  That is, for any connected triplets $(\lambda,\mu^1,\mu^2)$ surrounding the box $(m,k)$ from the southwest, its branching set with corresponding weights according to the insertion rule agree with $N$ and $w(\mu,\nu)$'s according to the growth graph rule.
  To distinguish the context of insertion rule and growth graph rule we also write $\lambda=\lambda^{k-1}$, $\mu^1=\lambda^{k}$, $\mu^2=\tilde\lambda^{k-1}$ and $\nu=\tilde\lambda^k\in N$ in accordance with the definition of the insertion algorithm in Section \ref{s:qrs}.
  We show this by discussing the location of $(m,k)$ in the permutation matrix of $\sigma$, which corresponds to the 5 cases in the growth graph rule.
  \begin{enumerate}
    \item The box $(m,k)$ has an $X$ in it. This means $\sigma_m=k$.
      So we are inserting a $k$ to the tableau at time $m$, hence $\lambda^{k-1}=\tilde\lambda^{k-1}$, i.e. $\lambda=\mu^2$. Moreover since $\sigma$ is a permutation, we have $\sigma_i\neq k$ for all $i<m$, hence $\lambda^{k-1}=\lambda^k$, i.e. $\lambda=\mu^1$ (condition of Case 1 satisfied).  
      Any branch of $(\lambda^{k-1},\lambda^k,\tilde\lambda^{k-1})=(\lambda,\lambda,\lambda)$ is one of the $\lambda+\ve_j$'s such that the weight $w((\lambda,\lambda,\lambda),\lambda+\ve_j)=w_0(k,j)\neq 0$.
      \begin{align*}
	w_0(k,j)=f_0(j;\lambda,\lambda)\prod_{p=j+1}^k(1-f_0(p;\lambda,\lambda))\\=
	\begin{cases}
	  (1-q^{\lambda_{j-1}-\lambda_j})q^{\lambda_j}&\text{ if }j>1\\
	  q^{\lambda_1}&\text{ if }j=1.
	\end{cases}
      \end{align*}
      So the weights agree. All the pruned branches $\lambda+\ve_j$ with $w_0(k,j)=0$ are exactly the $\lambda+\ve_j$'s that are not partitions. Therefore $N$ is exactly the set of all branches of the triplet.
    \item There's no $X$ in $(i,k)$ for $1\le i\le m$. This means $\sigma_i\neq k$ for $i\le m$. 
      Since $\sigma_i\neq k$ for $i<m$, $\lambda^{k-1}=\lambda^k$, i.e. $\lambda=\mu^1$ (condition of Case 2 satisfied).
      Moreover since $\sigma_m\neq k$, the triplet produces only one branch which is equal to $\mu^2$ with weight 1. This agrees with Case 2.
    \item There's no $X$ in $(m,i)$ for $1\le i\le k$. This means $\sigma_m>k$. 
      By the insertion rule, $\lambda^{k-1}=\tilde\lambda^{k-1}$, i.e. $\lambda=\mu^2$ (condition of Case 3 satisfied).
      Again by the same rule the triplet only produces one branch that equals $\mu^1$ with weight 1.
    \item There's one $X$ in each of $(t,k)$ and $(m,s)$ for some $t<m$ and $s<k$. 
      Then on the one hand $\sigma_t=k$ so $\lambda^k=\lambda^{k-1}+\ve_i$ for some $i$, i.e. $\mu^1=\lambda+\ve_i$.
      On the other hand $\sigma_m<k$ so $\tilde\lambda^{k-1}=\lambda^{k-1}+\ve_j$ for some $j$ and $j_{k-1}=j$, i.e. $\mu^2=\lambda+\ve_j$ (Condition of Case 4 or Case 5 satisfied).
      Therefore the branches of the triplet $(\lambda,\lambda+\ve_i,\lambda+\ve_j)$ are in the form of $\tilde\lambda^k=\lambda^k+\ve_{j_k}$ for $j_k\le j$ with weights
      \begin{equation}\label{eq:ij}
	\begin{aligned}
	  &w((\lambda,\lambda+\ve_i,\lambda+\ve_j),\lambda+\ve_i+\ve_{j_k})=w_1(k,j_k)=\\
	  &\begin{cases}
	    f_0(j_k;\lambda,\lambda+\ve_i)\left(\prod_{p=j_k+1}^{j-1}(1-f_0(p;\lambda,\lambda+\ve_i))\right)\\
	    \times(1-f_1(j;\lambda,\lambda+\ve_i))&\text{ if }j_k<j\\
	    f_1(j;\lambda,\lambda+\ve_i)&\text{ if }j_k=j
	  \end{cases}
	\end{aligned}
      \end{equation}
      If $j\neq i$ (condition of Case 4 satisfied), then $f_1(j;\lambda,\lambda+\ve_i)=(1-q^{\lambda_{j-1}-\lambda_j})/(1-q^{\lambda_{j-1}-\lambda_j})=1$.
      Therefore the branch has only one shape equal to $\mu^1+\ve_j$ with weight 1. This agrees with Case 4.
      
      If $j=i$ (condition of Case 5 satisfied), then by \eqref{eq:ij}
      \begin{align*}
	w_1(k,j_k)=
	\begin{cases}
	  \frac{1-q^{\lambda_{i-1}-\lambda_i-1}}{1-q^{\lambda_{i-1}-\lambda_i}};&\text{ if }j_k=i;\\
	  \frac{1-q}{1-q^{\lambda_{i-1}-\lambda_i}}q^{\lambda_j-\lambda_i-1}(1-q^{\lambda_{j-1}-\lambda_j});&\text{ if }2\le j_k< i;\\
	  \frac{1-q}{1-q^{\lambda_{i-1}-\lambda_i}}q^{\lambda_1-\lambda_i-1};&\text{ if }j_k=1.
	\end{cases}
      \end{align*}
      So the weights agree with Case 5. Moreover, all the pruned branches are exactly the $\lambda+\ve_i+\ve_{j_k}$'s that are not partitions. Therefore $N$ is indeed the set of all branches.
  \end{enumerate}
  When inverting a permutation $\sigma$ to $\sigma^{-1}$, the $X$ marks are transposed, so is the weighted graph by the symmetry of the rule, thus we have arrived at the conclusion.
\end{proof}
Note that in both rules although we consider an arbitrary box with index $(m,k)$, the rules do not depend on either $m$ or $k$. This is a key condition for the symmetry property to work and will be generalised in Proposition \ref{p:branch} in the following section.
Below is the growth graph of the permutation $1423$:
\begin{center}
  \begin{tikzpicture}[scale=1]
      \def \l{{0,1,2.6,5.6,11.6}}
      \def \c{.3}

      \node (s1) at (.5,.5) {$X$};
      \node (s2) at (1.8,8.6) {$X$};
      \node (s3) at (4.1,1.8) {$X$};
      \node (s4) at (8.6,4.1) {$X$};

      \tikzstyle{every node}=[font=\small,draw];
      \foreach \i in {0,...,4}
      {
      \node (\i0) at (\l[\i],\l[0]) {$\emptyset$};
      \node (0\i) at (\l[0],\l[\i]) {$\emptyset$};
      }
      \foreach \i in {1,...,4}
      {
      \node (\i1) at (\l[\i],\l[1]) {$1$};
      \node (1\i) at (\l[1],\l[\i]) {$1$};
      }
      \node (22) at (\l[2],\l[2]) {$1$};
      \node (23) at (\l[2],\l[3]) {$1$};

      \node (321) at ({\l[3] - \c},{\l[2] - \c}) {$2$};
      \node (322) at ({\l[3] + \c},{\l[2] + \c}) {$11$};

      \node (331) at ({\l[3] - \c},{\l[3] - \c}) {$2$};
      \node (332) at ({\l[3] + \c},{\l[3] + \c}) {$11$};

      \node (241) at ({\l[2] - \c},{\l[4] - \c}) {$2$};
      \node (242) at ({\l[2] + \c},{\l[4] + \c}) {$11$};

      \node (421) at ({\l[4] - \c},{\l[2] - \c}) {$2$};
      \node (422) at ({\l[4] + \c},{\l[2] + \c}) {$11$};

      \node (341) at ({\l[3] - \c * 3},{\l[4] - \c * 3}) {$3$};
      \node (342) at ({\l[3] - \c},{\l[4] - \c}) {$21$};
      \node (343) at ({\l[3] + \c},{\l[4] + \c}) {$21$};
      \node (344) at ({\l[3] + \c * 3},{\l[4] + \c * 3}) {$21$};

      \node (431) at ({\l[4] - \c * 3.3},{\l[3] - \c * 3.3}) {$3$};
      \node (432) at ({\l[4] - \c},{\l[3] - \c}) {$21$};
      \node (433) at ({\l[4] + \c},{\l[3] + \c}) {$21$};
      \node (434) at ({\l[4] + \c * 3},{\l[3] + \c * 3}) {$111$};

      \node (441) at ({\l[4] - \c * 11},{\l[4] - \c * 11}) {$4$};
      \node (442) at ({\l[4] - \c * 9},{\l[4] - \c * 9}) {$31$};
      \node (443) at ({\l[4] - \c * 7},{\l[4] - \c * 7}) {$31$};
      \node (444) at ({\l[4] - \c * 5},{\l[4] - \c * 5}) {$211$};
      \node (445) at ({\l[4] - \c * 3},{\l[4] - \c * 3}) {$31$};
      \node (446) at ({\l[4] - \c},{\l[4] - \c}) {$31$};
      \node (447) at ({\l[4] + \c},{\l[4] + \c}) {$22$};
      \node (448) at ({\l[4] + \c * 3},{\l[4] + \c * 3}) {$31$};
      \node (449) at ({\l[4] + \c * 5},{\l[4] + \c * 5}) {$211$};
      
      \tikzstyle{every node}=[font=\tiny,fill=white,sloped]

      \foreach \i [count=\j] in {0,...,3}
      {
      \draw (\i0) --  (\j0);
      \draw (0\i) --  (0\j);
      \draw (\i1) --  (\j1);
      \draw (1\i) --  (1\j);
      }

      \foreach \i in {2,...,4}
      {
      \draw (\i0) --  (\i1);
      \draw (0\i) --  (1\i);
      }

      \draw (21) --  (22);
      \draw (22) --  (23);

      \draw (12) --  (22);
      \draw (13) --  (23);

      \draw (23) -- node{$q$} (241);
      \draw (23) -- node{$1-q$} (242);
      \draw (14) -- node{$q$} (241);
      \draw (14) -- node{$1-q$} (242);

      \draw (31) -- node{$q$} (321);
      \draw (31) -- node{$1-q$} (322);
      \draw (22) -- node{$q$} (321);
      \draw (22) -- node{$1-q$} (322);

      \draw (321) --  (331);
      \draw (322) --  (332);
      \draw (23) --  (331);
      \draw (23) --  (332);

      \draw (321) --  (421);
      \draw (322) --  (422);
      \draw (41) --  (421);
      \draw (41) --  (422);

      \draw (331) --  (341);
      \draw (332) --  (342);
      \draw (331) --  (343);
      \draw (332) --  (344);
      \draw (241) --  (341);
      \draw (241) --  (342);
      \draw (242) --  (343);
      \draw (242) --  (344);

      \draw (421) -- node{$q^2$} (431);
      \draw (421) -- node{$1-q^2$} (432);
      \draw (422) -- node{$q$} (433);
      \draw (422) -- node{$1-q$} (434);
      \draw (331) -- node{$q^2$} (431);
      \draw (331) -- node{$1-q^2$} (432);
      \draw (332) -- node{$q$} (433);
      \draw (332) -- node{$1-q$} (434);

      \draw (341) --  (441);
      \draw (341) --  (442);
      \draw (342) --  (443);
      \draw (342) --  (444);
      \draw (343) --  (445);
      \draw (343) -- node{$q/(1+q)$} (446);
      \draw (343) -- node{$1/(1+q)$} (447);
      \draw (344) --  (448);
      \draw (344) --  (449);

      \draw (431) --  (441);
      \draw (432) --  (442);
      \draw (433) --  (443);
      \draw (434) --  (444);
      \draw (431) --  (445);
      \draw (432) -- node{$q/(1+q)$} (446);
      \draw (432) -- node{$1/(1+q)$} (447);
      \draw (433) --  (448);
      \draw (434) --  (449);


  \end{tikzpicture}
\end{center}
\section{More insertion algorithms}\label{s:other}
In \cite{stanley} the growth diagram technique was used to show the symmetry property for the RS algorithm with row insertion.
To row insert a $k$ into a tableau $P$, we again keep $\lambda^0\prec\lambda^1\prec\dots\prec\lambda^{k-1}$ unchanged. Then we append a box at the end of first row of $\lambda^k,\lambda^{k+1},\dots,\lambda^{k_1-1}$, where $k_1$ is the smallest number in row 1 of $P$ that is larger than $k$, then we append a box at the end of second row of $\lambda^{k_1},\lambda^{k_1+1},\dots,\lambda^{k_2-1}$, where $k_2$ is the smallest number in row 2 of $P$ that is larger than $k_1$, and so on and so forth. More precisely, define:
\begin{align*}
  k_0=k;\quad k_j=\min(\{k'> k_{j-1}:\lambda^{k'}_j>\lambda^{k'-1}_j\}\cup\{\ell+1\}),\quad j\ge 1.
\end{align*}
Then the output tableau has:
\begin{align*}
  \tilde\lambda^i=
  \begin{cases}
    \lambda^i,&\text{ if }i<k,\\
    \lambda^i+\ve_j,&\text{ if }k_{j-1}\le i<k_j\text{ for some }j.
  \end{cases}
\end{align*}
For example if we insert a 3 into the tableau \eqref{tab:ex1}, the one-column insertion diagram is as follows:
\begin{center}
  \begin{tikzpicture}
    \tikzstyle{every node}=[fill=white]
    \node (00) at (0,0) {$\emptyset$};
    \node (01) at (0,1) {$2$};
    \node (02) at (0,2) {$2$};
    \node (03) at (0,3) {$31$};
    \node (04) at (0,4) {$41$};
    \node (05) at (0,5) {$42$};
    \node (06) at (0,6) {$421$};
    \node (07) at (0,7) {$422$};
    \node (08) at (0,8) {$4322$};

    \node (10) at (1.1,0) {$\emptyset$};
    \node (11) at (1.1,1) {$2$};
    \node (12) at (1.1,2) {$2$};
    \node (13) at (1.1,3) {$41$};
    \node (14) at (1.1,4) {$42$};
    \node (15) at (1.1,5) {$421$};
    \node (16) at (1.1,6) {$4211$};
    \node (17) at (1.1,7) {$4221$};
    \node (18) at (1.1,8) {$43221$};
    
    \node at (.5,2.55) {$X$};

    \foreach \i in {0,...,8}
    \draw[->] (0\i) -- (1\i);
    \foreach \i [count=\j] in {0,...,7}
    {
    \draw[->] (0\i) -- (0\j);
    \draw[->] (1\i) -- (1\j);
    }

  \end{tikzpicture}
\end{center}
and the corresponding output tableau is
\begin{align*}
  \tilde P=
  \begin{array}{cccc}
    1&1&3&3\\3&4&8&\\5&7&&\\6&8&&\\8&&&
  \end{array}
\end{align*}
The row-insertion of a word is defined in the same way as in column inserting a word.
The normal row and column insertions are related in a few ways.

The most elegant one is the duality. Denote by $(P_\col(w),Q_\col(w))$ and $(P_\row(w),Q_\row(w))$ the tableau pairs when applying column and row insertions to word $w$ respectively.
Also denote by $w^r$ the inverse word of $w$: $w^r=(w_n,w_{n-1},\dots,w_1)$.
Moreover, denote by $\ev(Q)$ the evacuation operation on $Q$, see e.g. \cite{sagan,fulton}. Then
\begin{prop}[see e.g. \cite{fulton}]For any word $w$,
  \begin{align*}
    P_\col(w)=P_\row(w^r).
  \end{align*}
If furthermore $w$ is a permutation, then
\begin{align*}
  Q_\col(w)=(\ev (Q_\row(w^r)))'
\end{align*}
where $T'$ means the transposition of tableau $T$.
\end{prop}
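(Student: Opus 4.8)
The plan is to treat the two assertions separately and to reduce each to standard facts about the classical correspondences, using the first part as an input to the second.

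For the identity $P_\col(w)=P_\row(w^r)$ I would first reduce the equality of tableaux to an equality of shapes. A semistandard tableau $T$ is determined by the chain of shapes $\sh(T_{\le j})$, $j\ge1$, where $T_{\le j}$ is the subtableau of entries at most $j$, since $\sh(T_{\le j})/\sh(T_{\le j-1})$ is exactly the (horizontal strip of) boxes containing $j$. Two elementary observations then do the work. Restriction commutes with insertion: inserting a letter larger than $j$, whether by row or by column insertion, only ever displaces entries that are themselves larger than $j$ (the bumped entry is always at least the inserted one), so that $\sh(P_\row(w)_{\le j})=\sh(P_\row(w_{\le j}))$ and likewise for $P_\col$; and restriction commutes with reversal, $(w_{\le j})^r=(w^r)_{\le j}$. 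Hence it suffices to prove $\sh(P_\col(u))=\sh(P_\row(u^r))$ for every word $u$, and this is immediate from Greene's theorem: the row-insertion shape of $u$ is read off from maximal unions of weakly increasing subsequences (for the rows) and strictly decreasing subsequences (for the columns), whereas the column-insertion shape is read off from weakly decreasing and strictly increasing subsequences; since reversal interchanges ``weakly increasing'' with ``weakly decreasing'' and ``strictly decreasing'' with ``strictly increasing'', the two shapes coincide. Applying this with $u=w_{\le j}$ for every $j$ yields $P_\col(w)=P_\row(w^r)$.

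For the second identity, restricted to permutations, I would combine three inputs about standard tableaux. First, for a permutation column insertion is, step by step, the transpose of row insertion, so that $P_\col(w)=P_\row(w)'$ and $Q_\col(w)=Q_\row(w)'$. Second, the Schützenberger reversal theorem: reversing the word, $w\mapsto w^r$, acts on the row-insertion recording tableau as the evacuation involution, $Q_\row(w^r)=\ev(Q_\row(w))$, up to the transposition fixed by one's evacuation convention. Third, $\ev$ is an involution commuting with transposition, $\ev(T')=(\ev T)'$ and $\ev\,\ev=\mathrm{id}$. Substituting the second into $\ev(Q_\row(w^r))$ and using the involution property returns $Q_\row(w)$ (transposed as the convention dictates), and the first input then rewrites this in terms of $Q_\col(w)$; tracking the transpositions through the transposition-commuting involution $\ev$ collapses the composite to the claimed $Q_\col(w)=(\ev(Q_\row(w^r)))'$.

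The routine parts here are the restriction lemma, Greene's theorem, and the bookkeeping of transpositions in the three inputs. The genuinely substantial ingredient, and the step I expect to be the main obstacle, is the reversal theorem: that $w\mapsto w^r$ realises the Schützenberger evacuation on the recording tableau. This is the only place where jeu de taquin and evacuation enter in an essential way; everything else is formal. Conceptually, it is the combinatorial shadow of the fact that, in the growth-graph picture of Sections \ref{s:symnormal}--\ref{s:symq}, reflecting the array of $X$'s left-to-right (which realises $w\mapsto w^r$) acts as evacuation on the tableau recorded along the boundary, while transposing the array swaps $P$ and $Q$ as in Theorem \ref{thm:1}. One could therefore alternatively organise the entire argument around these two symmetries of the diagram together with the duality $P_\col(w)=P_\row(w^r)$ established in the first part.
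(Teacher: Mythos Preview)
The paper does not prove this proposition. It is stated with the attribution ``see e.g.\ \cite{fulton}'' and used only as background in Section~\ref{s:other}; no argument is given. There is thus nothing in the paper to compare your proposal against.

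A few remarks on the sketch itself. For $P_\col(w)=P_\row(w^r)$, your reduction to shapes via restriction is clean, but the assertion that ``the column-insertion shape is read off from weakly decreasing and strictly increasing subsequences'' is not the standard form of Greene's theorem; in most treatments this characterisation is \emph{deduced from} $P_\col(w)=P_\row(w^r)$ rather than proved independently, so invoking it here risks circularity. You would need to supply a direct column-bumping analogue of Greene's argument. The route actually taken in \cite{fulton} is different and more self-contained: one proves a commutation lemma (row-inserting $a$ then column-inserting $b$ equals column-inserting $b$ then row-inserting $a$), and the identity follows by induction since $R_a(\emptyset)=C_a(\emptyset)$.

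For the $Q$-identity your three inputs are the right ones, and you correctly identify the Sch\"utzenberger reversal theorem as the only substantive step. Be careful, though: the reversal theorem for row insertion already carries a transpose, $Q_\row(w^r)=(\ev\,Q_\row(w))'$, and combining this with $Q_\col(w)=Q_\row(w)'$ and the involution/transpose-commutation of $\ev$ yields $Q_\col(w)=\ev(Q_\row(w^r))$ rather than its transpose. Your hedge ``up to the transposition fixed by one's evacuation convention'' is doing real work here; the bookkeeping deserves to be written out explicitly, and you should check it against small examples before committing to the final form.
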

This duality has a matrix input generalisation where $Q_\row(w^r)$ is obtained by a reverse sliding operation, see e.g. \cite{fulton}.

Another simple relation is between the original definitions. 
Row insertion was initially defined as an algorithm of inserting and bumping based on the ordering of integers.
This definition turns into column insertion if one replaces all occurrance of ``row'' by ``column'' and replaces the strong order (greater than) with the weak order (greater than or equal to) due to the asymmetry of the ordering in rows and columns in the definition of a tableau.
For these definitions see e.g. \cite{sagan}.

The third relation is a bit more complicated. In column insertion, we initialise $j_{k-1}=j$. Then in each step we find the largest row index $j_k\le j_{k-1}$ for a letter $k$ such that $\lambda^{k-1}_{j_k-1}>\lambda^k_{j_k}$, append a box to $\lambda^k_{j_k}$ and increase $k$ by 1. 
In row insertion, we initialise $k_0=k$, and in each step one we find the smallest letter $k_j>k_{j-1}$ for a row index $j$ such that $\lambda^{k_j}_j>\lambda^{k_j-1}_j$, append a box to $\lambda^{k_{j-1}}_j,\dots,\lambda^{k_j-1}_j$ and increase $j$ by 1.

In a recent paper \cite{bp13}, a $q$-weighted Robinson-Schensted algorithm with row insertion was proposed. It is defined in a similar way as the $q$-column insertion and has the third relation with the column insertion, which we detail below.

In $q$-column insertion, we initialise $j_{k-1}=k$. Then in each step we run over all row indices $j_k\le j_{k-1}$ for a letter $k$,  with some weight append a box to $\lambda^k_{j_k}$ and increase $k$ by 1.
In $q$-row insertion, we initialise $k_0=k$. In each step one we run over all letters $k_j>k_{j-1}$ for a row index $j$ such that, with some weight we append a box to $\lambda^{k_{j-1}}_j,\dots,\lambda^{k_j-1}_j$ and increase $j$ by 1.

Our definition here is a reformulation equivalent to the one in \cite{bp13}. For $\mu\prec\lambda$ define
\begin{align*}
  g(1;\mu,\lambda)=1-q^{\lambda_1-\mu_1},\\
  g(j;\mu,\lambda)=\frac{1-q^{\lambda_j-\mu_j}}{1-q^{\mu_{j-1}-\mu_j}}\quad j\ge2.
\end{align*}
When $q$-row inserting a $k$ into a tableau $P$, we keep the first $k-1$ shapes unchanged: $\tilde\lambda^i=\lambda^i, i=1,\dots,k-1$, with weight $1$, $\tilde\lambda^k=\lambda+\ve_1$ and for $i>k$, we have a binary branching for each triplet $(\lambda^{i-1},\lambda^i,\tilde\lambda^{i-1}=\lambda^{i-1}+\ve_{j_{i-1}})$:
\begin{align*}
  u((\lambda^{i-1},\lambda^i,\lambda^{i-1}+\ve_{j_{i-1}}),\lambda^i+\ve_{j_i})=
  \begin{cases}
    g(j_{i-1};\lambda^{i-1},\lambda^i),&j_i=j_{i-1}+1;\\
    1-g(j_{i-1};\lambda^{i-1},\lambda^i),&j_i=j_{i-1};\\
    0,&\text{ otherwise.}
  \end{cases}
\end{align*}
Denote by $J_k(P,\tilde P)$ the weight of obtaining a $\tilde P$ after row inserting a $k$ into $P$.

We define the $q$-row insertion for word input in the same way as $q$-column insertion of words: successively $q$-row inserting letters, multiplying the weights, keeping a tableau $Q$ to record changes for each $P$, and merging the same tableau pairs by adding up the weights.
Denote by $\psi_w(P,Q)$ the weight of obtaining tableau pair $(P,Q)$ after $q$-row inserting a word $w$.
Then we have the same recursion rule: for a pair of tableaux $(P,Q)$ with the same shape of size $n$ and a word $w$ of length $n-1$,
\begin{align*}
  \psi_{wk}(P,Q)=\sum\psi_w(\hat P,Q^{n-1})J_k(\hat P,P).
\end{align*}
where the sum is over all tableau $\hat P$ with the same shape as $Q^{n-1}$.
Since the algorithm has a similar triplet branching structure as in Figure \ref{fig:branch}, we can build the one-column insertion construction like in Figure \ref{fig:insertletter}, and concatenate them into a growth graph. With the same approach we can show the symmetry property for this algorithm.
\begin{thm}
  For any permutation $\sigma\in S_n$ and standard tableau pair $(P,Q)$,
  \begin{align*}
    \psi_{\sigma^{-1}}(Q,P)=\psi_\sigma(P,Q).
  \end{align*}
\end{thm}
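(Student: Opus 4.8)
The plan is to transport the growth-graph argument of Theorem \ref{thm:main} essentially verbatim, replacing the column weights built from $f_0,f_1$ by the row weights $u$ built from $g$. First I would record that $q$-row insertion has exactly the triplet branching structure of Figure \ref{fig:branch}: each branching is governed by a triplet $(\lambda^{i-1},\lambda^i,\tilde\lambda^{i-1})$ with $\tilde\lambda^{i-1}=\lambda^{i-1}+\ve_{j_{i-1}}$ and produces children $\lambda^i+\ve_{j_i}$ with $j_i\in\{j_{i-1},j_{i-1}+1\}$. Hence I can form the one-column construction of Figure \ref{fig:insertletter}, placing the common weight $u((\lambda^{i-1},\lambda^i,\tilde\lambda^{i-1}),\tilde\lambda^i)$ on both the horizontal and the vertical edge into $\tilde\lambda^i$, and then concatenate these columns over the letters of $\sigma$ into a growth graph on $D_{n,\ell}$. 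As in Section \ref{s:wordqrs} the rightmost column reads off $P^{(p)}$, the top row reads off $Q^{(p)}$, and $\psi_\sigma(P,Q)=\sum_p w(\lambda^{\ell,(p)}(n))\,\mathbb{I}_{P^{(p)}=P,\,Q^{(p)}=Q}$.

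Second, I would reformulate the insertion rule as a symmetric growth-graph rule. For a box $(m,k)$ with connected triplet $(\lambda,\mu^1,\mu^2)$ on its southwest, northwest and southeast corners I would determine the admissible set $N$ of northeast partitions $\nu$ and the weights $w(\mu,\nu)$, written symmetrically in $\mu^1$ and $\mu^2$. The derivation runs through the same five configurations of $(m,k)$ relative to the $X$'s of the permutation matrix: $\sigma_m=k$ (an $X$ in the box, forcing $\mu^1=\mu^2=\lambda$ and, since row insertion adds to row $1$, the deterministic child $\lambda+\ve_1$); $k$ not yet inserted (so $\mu^1=\lambda$); the dual row condition $\sigma_m>k$ (so $\mu^2=\lambda$); and the two both-$X$ subcases in which $\mu^1=\lambda+\ve_i$, $\mu^2=\lambda+\ve_j$. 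In each case the child and its weight are read off from $u$ and $g$ exactly as the column derivation reads them from $w_0,w_1,f_0,f_1$; in particular the identity $g(j;\mu,\lambda)=0$ whenever $\lambda_j=\mu_j$ collapses the binary branching to a single unit-weight child both when $\mu^1=\lambda$ and when $\mu^1=\lambda+\ve_i$ with $i\neq j$.

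The decisive and last step is to observe that this rule is invariant under interchanging $\mu^1$ and $\mu^2$: the two ``no $X$'' cases are swapped while the remaining three are self-dual, and every genuinely nontrivial $g$-weight occurs only when $\mu^1=\mu^2$, where symmetry is automatic. Equally important, neither $N$ nor $w(\mu,\nu)$ depends on $m$ or $k$ --- this is precisely the hypothesis isolated in the remark after Theorem \ref{thm:main} and abstracted in Proposition \ref{p:branch}. Granting these, inverting $\sigma$ transposes the array of $X$'s across the main diagonal, and relabelling each vertex $(i,j)$ by the label formerly at $(j,i)$ yields a valid growth graph for $\sigma^{-1}$ with the roles of $P$ and $Q$ exchanged; reading off weights then gives $\psi_{\sigma^{-1}}(Q,P)=\psi_\sigma(P,Q)$.

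I expect the only real computation to sit in the both-$X$ subcase with $i=j$ (the row analogue of Case $5$), where the branching $j_i\in\{j_{i-1},j_{i-1}+1\}$ is genuinely binary and the children $\lambda+2\ve_i$ and $\lambda+\ve_i+\ve_{i+1}$ carry weights $1-g(i;\lambda,\lambda+\ve_i)$ and $g(i;\lambda,\lambda+\ve_i)$. The task there is to confirm that these match the prescribed growth-graph weights, that the pruned children are exactly the non-partitions (which one sees from $g=0$ or $1-g=0$ at the boundary values of $\lambda_{i-1}-\lambda_i$), and that the resulting weight depends only on $\lambda$ and $i$, hence is symmetric and height-independent. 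This is where the strong/weak inequality asymmetry between rows and columns of a tableau must be checked not to spoil the reformulation.
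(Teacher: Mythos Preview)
Your proposal is correct and follows essentially the same approach as the paper: set up the growth graph from the triplet-branching structure, derive a symmetric five-case local rule (with Case~1 deterministic at $\lambda+\ve_1$, Cases~2--4 collapsing to a single unit-weight child via $g(j;\lambda,\lambda+\ve_i)=0$ when $i\neq j$, and Case~5 carrying the genuine binary branching with weights $g$ and $1-g$), verify it agrees with the insertion rule, and conclude by transposing the diagram. The paper's sketch proof records exactly the same five cases with the same weights and then defers the verification to the argument of Theorem~\ref{thm:main}.
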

\begin{proof}[A sketch proof]
  Again denote by $\lambda,\mu^1,\mu^2$ nodes associated with vertices surrounding a box from the south and the west.
The set $M$ of all partitions branched from the triplet with $(u(\mu,\nu):\nu\in M)$ belongs to one of the following 5 cases.
\begin{enumerate}
  \item The box has an $X$ in it. And $\mu^1$, $\mu^2$ are equal to $\lambda$. Then $M$ consists of only one partition $\lambda+\ve_1$ with weight 1.
  \item The box does not have an $X$ in it and $\mu^1=\lambda$. Then $M$ consists of only one partition $\mu^2$ with weight 1.
  \item The box does not have an $X$ in it and $\mu^2=\lambda$. Then $M$ consists of only one partition $\mu^1$ with weight 1.
  \item The box is empty. $\mu^1=\lambda+\ve_i$ and $\mu^2=\lambda+\ve_j$ for some $i\neq j$. Then $M$ again only contains one element $\mu^1\cup\mu^2$ with weight 1.
  \item The box is empty and $\mu^1=\mu^2=\lambda+\ve_i:=\mu$ for some $i$. Then $M=\{\lambda+\ve_i+\ve_{i+1},\lambda+2\ve_i\}$ with weight
    \begin{align*}
      u((\lambda,\lambda+\ve_i,\lambda+\ve_i),\lambda+\ve_{i+1})=
      \begin{cases}
	1-q,&\text{ if }i=1;\\
	\frac{1-q}{1-q^{\lambda_{i-1}-\lambda_i}},&\text{ if }i>1.
      \end{cases}\\
      u((\lambda,\lambda+\ve_i,\lambda+\ve_i),\lambda+2\ve_i)=
      \begin{cases}
	q,&\text{ if }i=1;\\
	1-\frac{1-q}{1-q^{\lambda_{i-1}-\lambda_i}},&\text{ if }i>1.
      \end{cases}
    \end{align*}
\end{enumerate}
The claim is concluded once these symmetric rules are verified to be equivalent to the insertion rule, which is done in the same way as in the proof of Theorem \ref{thm:main}.
\end{proof}

More generally, the symmetry property does not require very strong conditions on the insertion algorithms. Here we give a sufficient condition for an insertion algorithm to have this property.

First we define a \textit{branching insertion algorithm} as an algorithm that has the branching structure as in Figure \ref{fig:branch} when we insert a letter to a tableau. 
That is, there exists an \textit{initial branching weight function} $w_0$, a \textit{high level weight function} $w_1$ and a \textit{low level weight function} $w_2$ such that when inserting a letter $k$, the weight of a new $i$th shape is:
\begin{align*}
  w((\lambda^{i-1},\lambda^i,\tilde\lambda^{i-1}),\tilde\lambda^i)=
  \begin{cases}
    w_0((\lambda^{i-1},\lambda^i,\tilde\lambda^{i-1}),\tilde\lambda^i),&\text{ if }i=k;\\
    w_1((\lambda^{i-1},\lambda^i,\tilde\lambda^{i-1}),\tilde\lambda^i),&\text{ if }i>k;\\
    w_2((\lambda^{i-1},\lambda^i,\tilde\lambda^{i-1}),\tilde\lambda^i),&\text{ if }i<k.
  \end{cases}
\end{align*}

Then we have
\begin{prop}\label{p:branch}
  If a branching insertion algorithm satisfies the following conditions:
  \begin{description}
    \item[(i)] The insertion of $k'$ into a tableau results in increment of one coordinate by 1 in $\lambda^{k'},\lambda^{k'+1},\dots,\lambda^\ell$, while keep $\lambda^0,\lambda^1,\dots,\lambda^{k'-1}$ unchanged, that is, the support of $w_0((\lambda^{i-1},\lambda^i,\tilde\lambda^{i-1}),\tilde\lambda^i)$ and $w_1((\lambda^{i-1},\lambda^i,\tilde\lambda^{i-1}),\tilde\lambda^i)$ are in $\{\lambda^i+\ve_j:j\ge1\}$ and $w_2((\lambda^{i-1},\lambda^i,\tilde\lambda^{i-1}),\tilde\lambda^i)=\mathbb I_{\lambda^i=\tilde\lambda^i}$,
    \item[(ii)] $w_1((\lambda^{m-1},\lambda^m,\lambda^{m-1}+\ve_i),\lambda^m+\ve_j)=\mathbb I_{i=j}$ if $\lambda^m_i=\lambda^{m-1}_i$;
    \item[(iii)] $w_0((\lambda,\lambda,\lambda),\lambda+\ve_i)$ does not depend on the inserted letter;
    \item[(iv)] $w_1((\lambda,\lambda+\ve_i,\lambda+\ve_i),\lambda+\ve_i+\ve_j)$ does not depend on the inserted letter,
  \end{description}
  then it has the symmetry property.
\end{prop}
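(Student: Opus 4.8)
The plan is to follow the proof of Theorem~\ref{thm:main} almost verbatim, with the explicit $q$-weights replaced by the abstract functions $w_0,w_1,w_2$. Concretely, I would reformulate the one-letter insertion rule as a position-independent \emph{growth graph rule} on the $n\times n$ lattice, show that this rule is invariant under exchanging the northwest and southeast corners of every box, and then argue that inverting $\sigma$ merely transposes the array of $X$-marks, so that this invariance interchanges the output tableaux $P$ and $Q$ while preserving every edge weight. Writing $\phi$ for the word-weight produced by the given branching algorithm, this yields $\phi_{\sigma^{-1}}(Q,P)=\phi_\sigma(P,Q)$, which is the symmetry property.

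First I would fix a box with index $(m,k)$ and, exactly as in Section~\ref{s:symq}, name the partitions on its southwest, northwest and southeast corners $\lambda=\lambda^{k-1}(m-1)$, $\mu^1=\lambda^k(m-1)$, $\mu^2=\lambda^{k-1}(m)$, and write $\nu=\lambda^k(m)$ for the branch on the northeast corner. Condition~(i) says that inserting $\sigma_m$ leaves all levels below $\sigma_m$ unchanged and adds a single box to each level at or above $\sigma_m$; in particular each of $\mu^1,\mu^2,\nu$ differs from $\lambda$ by at most one box. Hence the local data $(\lambda,\mu^1,\mu^2)$ together with the presence or absence of an $X$ falls into exactly the five configurations of the growth graph rule, and these configurations correspond precisely to the location of $(m,k)$ in the permutation matrix of $\sigma$ (an $X$ in the box; column $k$ empty up to row $m$; $\sigma_m>k$; an $X$ both above in the column and to the left in the row with distinct target rows; and the same with a common target row).

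The central step is to match the abstract weights to these five cases. For the configurations carrying no $X$, namely Cases~2, 3 and~4, conditions~(i) and~(ii) force the branching to collapse to a single partition of weight~$1$. When the inserted letter exceeds $k$ the level-$k$ shape is unchanged and $w_2=\mathbb I_{\lambda^k=\tilde\lambda^k}$ selects $\nu=\mu^1$ by~(i); when it is smaller than $k$ the box added at level $k-1$ occupies a row $i$ with $\lambda^k_i=\lambda^{k-1}_i$, so condition~(ii) gives $w_1=\mathbb I_{i=j}$ and forces the unique branch $\mu^2$ or $\mu^1\cup\mu^2$. Checking that the hypothesis $\lambda^k_i=\lambda^{k-1}_i$ of~(ii) is genuinely met in each sub-case, and that the forced branch matches the symmetric prescription, is the part I expect to require the most care. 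For the remaining Cases~1 and~5, conditions~(iii) and~(iv) guarantee that $w_0((\lambda,\lambda,\lambda),\cdot)$ and $w_1((\lambda,\lambda+\ve_i,\lambda+\ve_i),\cdot)$ are independent of the inserted letter, so the weights assigned to a box depend only on the triplet $(\lambda,\mu^1,\mu^2)$ and not on $(m,k)$; this position-independence is exactly what makes the growth graph rule well-defined.

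Finally I would observe that the resulting rule is manifestly symmetric under the exchange $\mu^1\leftrightarrow\mu^2$: in Cases~1, 4 and~5 the two partitions $\mu^1,\mu^2$ enter symmetrically, while Cases~2 and~3 are carried into one another and produce the same northeast partition. Because the one-column construction (Figure~\ref{fig:insertletter}) already places equal weights on the horizontal edge $\mu^1\to\nu$ and the vertical edge $\mu^2\to\nu$, the transposition $(i,j)\mapsto(j,i)$ of the whole growth graph sends the $X$-configuration of $\sigma$ to that of $\sigma^{-1}$ and leaves every edge weight unchanged. Reading $P$ off the right column and $Q$ off the top row then gives the asserted symmetry property.
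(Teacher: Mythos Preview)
Your proposal is correct and follows essentially the same route as the paper's own sketch proof: set up the five-case growth graph rule, use conditions~(i)--(iv) to verify that the insertion rule reduces to it, observe the $\mu^1\leftrightarrow\mu^2$ symmetry, and conclude via transposition of the $X$-configuration. The only point worth flagging is that the paper uses condition~(ii) not just to force the singleton branch in Cases~2 and~4 but also to establish the \emph{equivalence} between the configuration $\lambda=\mu^1$ and the statement that no $k$ has been inserted so far (one needs~(ii) inductively to propagate the equality $\lambda^{k-1}=\lambda^k$ through earlier insertions of letters smaller than $k$); you gesture at this but do not spell it out, so be sure to make that inductive step explicit when you write the details.
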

\begin{proof}[Sketch proof]
  We use the same notations $\lambda,\mu^1,\mu^2$ as in descriptions of growth graph rules such that they surround the box $(m,k)$ from the south and the west.
  The symmetry property is shown once we can construct a symmetric growth graph rule, where the relation of $\lambda$, $\mu^1$ and $\mu^2$ and whether there's an $X$ in the box, determines the location of the box in the permutation matrix, and vice versa. 
  
  Then the 5 cases of the growth diagram rule are satisfied given these conditions: for Case 1, the equivalence between $(\lambda=\mu^1=\mu^2\AND X)$ and $(\sigma_m=k)$ is given by (i) and (ii), and the branched shapes and weights are given by (iii); 
  for Case 2 both the equivalence between $(\lambda=\mu^1\AND\NOT X)$ and $(\sigma_s\neq k\forall s\le m)$ is given by (ii), and the singleton branching with weight $1$ is also given by (ii); 
  for Case 3 both the equivalence between $(\lambda=\mu^2\AND\NOT X)$ and $(\sigma_m>k)$ is given by (i), and the singleton shape with weight $1$ is also given by (i); 
  for Case 4 and 5 the equivalence between $(\mu^1=\lambda+\ve_i\AND\mu^2=\lambda+\ve_j\AND\NOT X)$ and $(\sigma^{-1}_k<m\AND\sigma_m<k)$ is given by (i)(ii), and the singleton shape with weight $1$ in Case 4 is given by (ii) while the branched shapes and weights in Case 5 is given by (iv).
\end{proof}

With this proposition we can test different algorithms for the symmetry property, although not against the symmetry property since it's only a sufficient condition.

For example, one column insertion algorithm proposed by \cite{bp13} satisfies the conditions and has a symmetry property.
See Dynamics 3 in Section 6.5.3 and (8.5) in Section 8.2.1 in that paper.
Here we restate the definition in the framework of branching insertion algorithms.
Denote for $\mu\prec\lambda$:
\begin{align*}
  I^j(\lambda;\mu):=\lambda+\ve_{\max(\{i\le j:\mu_{i-1}>\lambda_i\}\cup\{1\})}.
\end{align*}
The algorithm is defined by the following weights
\begin{align*}
  &w_0((\lambda^{k-1},\lambda^k,\tilde\lambda^{k-1}),\tilde\lambda^k)=\mathbb I_{\tilde\lambda^k=I^k(\lambda^k;\lambda^{k-1})},\\
  &w_1((\lambda^{i-1},\lambda^i,\tilde\lambda^{i-1}),\tilde\lambda^i)\\
  &=\begin{cases}
    -\frac{q^{\lambda^{i-1}_j-\lambda^i_{j+1}+1}(1-q^{\lambda^i_j-\lambda^{i-1}_j})}{(1-q^{\lambda^{i-1}_j-\lambda^i_{j+1}+1})(1-q^{\lambda^{i-1}_{j-1}-\lambda^{i-1}_j})},\\\qquad\qquad\qquad\qquad\text{ if }\tilde\lambda^{i-1}=\lambda^{i-1}+\ve_{j},\tilde\lambda^i=\lambda^i+\ve_{j+1}\text{ for some }j\ge2;\\
    -\frac{q^{\lambda^{i-1}_1-\lambda^i_2+1}(1-q^{\lambda^i_1-\lambda^{i-1}_1})}{1-q^{\lambda^{i-1}_1-\lambda^i_2+1}},\\\qquad\qquad\qquad\qquad\text{ if }\tilde\lambda^{i-1}=\lambda^{i-1}+\ve_1,\tilde\lambda^i=\lambda^i+\ve_2;\\
    1+\frac{q^{\lambda^{i-1}_j-\lambda^i_{j+1}+1}(1-q^{\lambda^i_j-\lambda^{i-1}_j})}{(1-q^{\lambda^{i-1}_j-\lambda^i_{j+1}+1})(1-q^{\lambda^{i-1}_{j-1}-\lambda^{i-1}_j})},\\\qquad\qquad\qquad\qquad\text{ if }\tilde\lambda^{i-1}=\lambda^{i-1}+\ve_{j},\tilde\lambda^i=I^{j}(\lambda^i;\lambda^{i-1})\text{ for some }j\ge2;\\
    1+\frac{q^{\lambda^{i-1}_1-\lambda^i_2+1}(1-q^{\lambda^i_1-\lambda^{i-1}_1})}{1-q^{\lambda^{i-1}_1-\lambda^i_2+1}},\\\qquad\qquad\qquad\qquad\text{ if }\tilde\lambda^{i-1}=\lambda^{i-1}+\ve_1,\tilde\lambda^i=\lambda^i+\ve_1;\\
    0,\\\qquad\qquad\qquad\qquad\text{ otherwise,}
  \end{cases}\\
  &w_2((\lambda^{i-1},\lambda^i,\tilde\lambda^{i-1}),\tilde\lambda^i)=\mathbb I_{\tilde\lambda^i=\lambda^i}.
\end{align*}
We can see from these weight function formulae that this algorithm is different from the $q$-column insertion algorithm in the previous sections. Qualitatively, it has row-insertion like branching of inserting a bumped letter into a lower row, which never happens in normal column insertion where a bumped letter stays in the same row or is inserted to a higher row. As such, it is more natural to compare the $q$-column insertion discussed in the previous sections with the $q$-row insertion, as we have pointed out the similarity of the relation between the $q$-column insertion and the $q$-row insertion and the relation between the normal column and row insertions.

In \cite{bp13} another pair of $q$-RS column and row insertion algorithms called ``$q$-Whittaker-multivariate `dynamics' with deterministic long-range interactions'' were introduced. 
These are also branching insertion algorithms.
We omit the definitions here and point interested readers to Section 8.2.2 of that paper.
These algorithms also satisfy the conditions in Proposition \ref{p:branch} and thus enjoy the symmetry property.

\end{document}